\newtheorem{lemma}{Lemma}
\newtheorem{thm}{Theorem}
\newtheorem{definition}{Definition}
\newtheorem{example}{Example}
\newtheorem{corollary}{Corollary}
\numberwithin{equation}{section}
\begin{document}

\leftline{ \scriptsize \it}
\title[]
{BETTER APPROXIMATION OF FUNCTION BY $\alpha-$BERNSTEIN-P\u{A}LT\u{A}NEA OPERATORS}
\maketitle

\begin{center}
{\bf  Jaspreet Kaur$^1$ and Meenu Goyal$^1$}
\vskip0.2in
$^{1}$School of Mathematics\\
Thapar Institute of Engineering and Technology, Patiala\\
Patiala-147004, India\\
\vskip0.2in
jazzbagri3@gmail.com and meenu\_rani@thapar.edu
\end{center}
%
\begin{abstract}
In this paper, we present a new type of $\alpha-$Bernstein-P\u{a}lt\u{a}nea operators having a better order of approximation than itself. We establish some approximation results concerning the rate of convergence, error estimation and asymptotic formulas for the new modifications. Also, the theoretical results are verified by using MAPLE algorithms.\\
Keywords: Modulus of continuity, convergence of series and sequences, rate of convergence, Approximation by positive operators, asymptotic approximations.\\
Mathematics Subject Classification(2010): 26A15, 40A05, 41A10, 41A25, 41A35, 41A60.
\end{abstract}
\section{\bf{{Introduction}}}
Weierstrass's approximation theorem is one of the most important theorem in approximation theory. In 1912, S. N. Bernstein \cite{BER} proved this theorem by using the positive and linear operators, which are known as Bernstein operators. For $f\in C[0,1],$ they are defined as
$$\mathcal{B}_n(f,x)=\sum_{k=0}^np_{n,k}(x)f\left(\frac{k}{n}\right), $$
where $p_{n,k}(x)= {n \choose k}  x^k  (1-x)^{n-k}.$ He proved that the operators $\mathcal{B}_{n}(f,x)$ converge uniformly to $f$ on $C[0,1].$ Although these operators are known by their simplicity and have some remarkable properties of approximations, yet these positive linear operators have slow rate of convergence. In order to make it more attractive, in a computational point of view, several modifications and improvements have been investigated by many authors (see \cite{PLB, DT, GA, CAM}).

In \cite{CHE}, Chen et al. gave a new family of generalized Bernstein operators depending upon a non-negative real parameter $0 \leq \alpha \leq 1,$ which is given as follows:
\begin{eqnarray}\label{m2}
\mathcal{T}_{n}^{\alpha}(f,x)=\sum_{k=0}^n p_{n,k}^{\alpha}(x)f\left(\frac{k}{n}\right),
\end{eqnarray}
where $f(x)$ defined on $[0,1]$ and $n\in \mathbb{N}.$
Also, the $\alpha-$Bernstein polynomial $p_{n,k}^{\alpha}(x)$ of degree $n$ is defined by $p_{1,0}^{\alpha}(x)=1-x, \quad p_{1,1}^{\alpha}(x)=x$ and
$$p_{n,k}^{\alpha}(x)=\left[{n-2\choose k}x(1-\alpha)+{n-2 \choose k-2}(1-x)(1-\alpha)+{n \choose k}\alpha x(1-x)\right]x^{k-1}(1-x)^{n-k-1}, \quad n\geq2.$$
For $\alpha =1,$ it reduces to original Bernstein operators. These operators have certain elementary properties, which give significant contribution in uniform convergence of functions without depending on the parameter $\alpha.$\\
As we can see positive linear summation operators are useful for the convergence of only continuous functions. So, in 1967, Durrmeyer \cite{DU7} modified the Bernstein operators to approximate Lebesgue integrable functions on $[0,1].$ The Durrmeyer variant of Bernstein operators attracted attention of several authors (see \cite{MMD, ZD}), due to its usefulness. Recently, Kajla and Acar \cite{KA} introduced the Durrmeyer variant of the summation operators $(\ref{m2})$ and studied the rate of convergence and some approximation properties. Most recently, Kajla and Goyal \cite{KG} modified these Durrmeyer operators by using P\u{a}lt\u{a}nea basis function in an integral depending on a parameter $\rho>0$ as follows:
\begin{eqnarray}\label{OP}
\textit{Q}_{n,\rho}^{\alpha}(f,x)=\sum_{k=0}^n p_{n,k}^{\alpha} \int_0^1 \mu_{n,k}^\rho(t)f(t)dt,
\end{eqnarray}
where
$$\mu_{n,k}^\rho(t)=\frac{t^{k\rho}(1-t)^{(n-k)\rho}}{B(k\rho+1,(n-k)\rho+1)}$$
and $B(k\rho+1,(n-k)\rho+1)$ is Beta function. They have studied the approximation properties, asymptotic behaviour and the order of convergence of these operators.\\
Here, our main motive is to improve approximation behaviour and order of convergence for the operators (\ref{OP}). In \cite{KA7}, Khosravian-Arab et al. modified the well known Bernstein operators by using a new approach to improve the degree of approximation. Following this, Acu et al. \cite{DUR} have applied this approach on the Bernstein-Durrmeyer operators. In an another paper \cite{GTA}, same authors have put it on the Bernstein-Kantorovich operators too. Similarly, Kajla and Acar \cite{Kaj} have modified the $\alpha-$Bernstein summation operators. The inspiration of getting better approximation results for positive linear operators leads us to modify the $\alpha-$Bernstein-P\u{a}lt\u{a}nea operators that is defined in (\ref{OP}). In the present paper, we establish an approach to construct some modifications of these operators which have better convergence behaviour than the classical one.

Our work is organized as follows: In section 2, we define $\alpha-$Bernstein-P\u{a}lt\u{a}nea operators of first order which yields better results than the original operators. In section 3 and 4, we introduce $\alpha-$Bernstein-P\u{a}lt\u{a}nea operators of second order and third order respectively which possess better order of approximation than the operators (\ref{OP}). In section 5, we verify the theoretical results obtained in section 2-4 numerically using Maple algorithms.

\section{$\alpha-$Bernstein-P\u{a}lt\u{a}nea operators of first order}
In this section, we define $\alpha-$Bernstein-P\u{a}lt\u{a}nea operators of first order.
\begin{eqnarray}\label{a1}
\textit{J}_{n,\alpha,\rho}^{M,1}(f,x) &=& \sum_{k=0}^n p_{n,k,\alpha}^{M,1}(x)\int_ 0^1 \mu_{n,k}^\rho(t)f(t),  x\in[0,1]\\
p_{n,k,\alpha}^{M,1}(x) &=& a(x,n)p_{n-1,k,\alpha}(x)+a(1-x,n)p_{n-1,k-1,\alpha}(x), 0\leq k\leq n-1,
\end{eqnarray}
and $a(x,n)=a_1(n)x+a_0(n),$
where $a_0(n)$ and $a_1(n)$ are two unknown sequences, which can be determined to satisfy our purposes. For $a_0(n)=1$ and $a_1(n)=-1,$ it reduces to the operators (\ref{OP}). Now, we compute some preliminary results which will be useful to study the uniform convergence and asymptotic results. For this we assume $e_i= x^i, i=0,1,2,\dotsb.$
\begin{lemma}\label{l1}
The moments of the operators (\ref{a1}) are given by:
\begin{eqnarray*}
\textit{J}_{n,\alpha,\rho}^{M,1}(e_0;x) &=& (2a_0(n)+a_1(n));\\
\textit{J}_{n,\alpha,\rho}^{M,1}(e_1;x) &=& (2a_0(n)+a_1(n))x+\frac{1}{n\rho+2}\left[(1-2x)(a_0(n)(\rho+2)+a_1(n)(\rho+1))\right];\\
\textit{J}_{n,\alpha,\rho}^{M,1}(e_2;x) &=& (2a_0(n)+a_1(n))x^2+ \frac{1}{(n\rho+2)(n\rho+3)}\left[n\left\{\rho x(3-5x)(2a_0(n)+a_1(n))+{\rho}^2x(a_0(n)(4-6x)\right.\right.\\
&& +\left.a_1(n)(3-5x))\right\} +(-6x^2+2\alpha {\rho}^2x^2-2\alpha {\rho}^2x+2)(2a_0(n)+a_1(n))\\
&& +\left.\rho(\rho+3-6x)a_0(n)+\rho(2\rho x^2-6x-2\rho x+\rho+3)a_1(n)\right].
\end{eqnarray*}
\end{lemma}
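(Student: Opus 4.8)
The plan is to exploit the two–term structure of the basis $p_{n,k,\alpha}^{M,1}$ to split $\textit{J}_{n,\alpha,\rho}^{M,1}$ into two sums over the genuine $\alpha$–Bernstein polynomials $p_{n-1,k,\alpha}$, and then to reduce each moment to (i) elementary Beta–integral evaluations for the Pălțănea kernel and (ii) the already–known moments of the classical $\alpha$–Bernstein operator. Writing $a(x,n)=a_1(n)x+a_0(n)$, $a(1-x,n)=a_1(n)(1-x)+a_0(n)$ and using that $p_{n-1,k-1,\alpha}\equiv 0$ for $k=0$ while $p_{n-1,k,\alpha}\equiv 0$ for $k=n$, one gets after relabelling the dummy index in the second sum
\begin{align*}
\textit{J}_{n,\alpha,\rho}^{M,1}(f;x)
&= a(x,n)\sum_{k=0}^{n-1}p_{n-1,k,\alpha}(x)\int_0^1\mu_{n,k}^{\rho}(t)f(t)\,dt\\
&\quad + a(1-x,n)\sum_{k=0}^{n-1}p_{n-1,k,\alpha}(x)\int_0^1\mu_{n,k+1}^{\rho}(t)f(t)\,dt.
\end{align*}

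For the kernel I would record, via $B(p,q)=\Gamma(p)\Gamma(q)/\Gamma(p+q)$, the monomial moments
$$\int_0^1\mu_{n,k}^{\rho}(t)\,t^{j}\,dt=\frac{B(k\rho+1+j,(n-k)\rho+1)}{B(k\rho+1,(n-k)\rho+1)}=\frac{(k\rho+1)(k\rho+2)\cdots(k\rho+j)}{(n\rho+2)(n\rho+3)\cdots(n\rho+1+j)},\qquad j=0,1,2,$$
so that $\int\mu_{n,k}^{\rho}=1$, $\int t\,\mu_{n,k}^{\rho}=\frac{k\rho+1}{n\rho+2}$, $\int t^{2}\mu_{n,k}^{\rho}=\frac{(k\rho+1)(k\rho+2)}{(n\rho+2)(n\rho+3)}$, together with the analogous expressions with $k$ replaced by $k+1$ for the second sum. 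Substituting $f=e_j$ turns each inner integral into a polynomial of degree $j$ in $k$, so $\textit{J}_{n,\alpha,\rho}^{M,1}(e_j;x)$ becomes a linear combination of the power sums $S_m(x):=\sum_{k=0}^{n-1}k^{m}p_{n-1,k,\alpha}(x)$, $0\le m\le j$, with coefficients built from $a(x,n)$, $a(1-x,n)$ and the fixed rationals above (the second sum contributing the shifted sums $\sum_k(k+1)^m p_{n-1,k,\alpha}(x)$, which expand into the $S_m$ by the binomial theorem).

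These power sums are exactly the moments of $\mathcal{T}_{n-1}^{\alpha}$: since $\mathcal{T}_{m}^{\alpha}(e_i;x)=\sum_k(k/m)^{i}p_{m,k,\alpha}(x)$ with $\mathcal{T}_{m}^{\alpha}(e_0;x)=1$, $\mathcal{T}_{m}^{\alpha}(e_1;x)=x$ and $\mathcal{T}_{m}^{\alpha}(e_2;x)$ the explicit $\alpha$–dependent quadratic of \cite{CHE}, we obtain $S_0=1$, $S_1=(n-1)x$ and $S_2=(n-1)^2\mathcal{T}_{n-1}^{\alpha}(e_2;x)$. It then remains to insert these, to use the identities $a(x,n)+a(1-x,n)=2a_0(n)+a_1(n)$ and $a(1-x,n)-a(x,n)=a_1(n)(1-2x)$, and to collect powers of $x$; a good consistency check at the end is that the specialization $a_0(n)=1$, $a_1(n)=-1$ must reproduce the moments of $\textit{Q}_{n,\rho}^{\alpha}$ from \cite{KG}.

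The $e_0$ and $e_1$ cases are short, and the $\alpha$–term never appears there because $\mathcal{T}^{\alpha}$ preserves constants and linear functions. The main obstacle is the $e_2$ identity: one has to push the $\rho^{2}k^{2}$, $3\rho k$ and $2$ pieces of $(k\rho+1)(k\rho+2)$ through the sums, in particular carry the $\alpha\rho^{2}$– and $\rho^{2}$–weighted parts of $S_2=(n-1)^2\mathcal{T}_{n-1}^{\alpha}(e_2;x)$ together with the extra $2\rho^{2}k+\rho^{2}+3\rho$ coming from the shift $k\mapsto k+1$, and then reorganize a fairly long polynomial in $x$, $\rho$, $\alpha$, $n$ into the stated form; this is where a slip is most likely, and it is worth double–checking symbolically (consistent with the Maple verification promised in Section~5). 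A minor caveat is that $S_1=(n-1)x$ and $S_2=(n-1)^2\mathcal{T}_{n-1}^{\alpha}(e_2;x)$ presuppose $n-1\ge 1$, and the defining recursion of $p_{m,k,\alpha}$ needs $m\ge 2$, the remaining small values of $n$ being checked directly.
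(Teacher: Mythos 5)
Your proposal is correct: the paper states Lemma \ref{l1} without proof, and your computation is exactly the argument implicit in the paper, namely the rewriting of $\textit{J}_{n,\alpha,\rho}^{M,1}$ in the form (\ref{a9}), the Beta-function moments $\int_0^1\mu_{n,k}^{\rho}(t)t^j\,dt=\frac{(k\rho+1)\cdots(k\rho+j)}{(n\rho+2)\cdots(n\rho+j+1)}$, and the $\alpha$-Bernstein moments of Chen et al.\ (the same quantity $(n-1)^2x^2+(n-1+2(1-\alpha))x(1-x)$ that reappears in the $K$- and $L$-moments of Theorem \ref{t3}). I checked that carrying out your reduction with $S_0=1$, $S_1=(n-1)x$, $S_2=(n-1)^2\mathcal{T}_{n-1}^{\alpha}(e_2;x)$ and the identities $a(x,n)+a(1-x,n)=2a_0(n)+a_1(n)$, $a(1-x,n)=a_0(n)+a_1(n)(1-x)$ reproduces all three stated moments, so the plan is sound as written.
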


\begin{lemma}\label{l2}
The central moments of the operators $\textit{J}_{n,\alpha,\rho}^{M,1}(f;x)$ are given as:
\begin{eqnarray*}
\textit{J}_{n,\alpha,\rho}^{M,1}(t-x;x) &=& \frac{1}{n\rho+2}\left[(1-2x)(a_0(n)(\rho+2)+a_1(n)(\rho+1))\right];\\
\textit{J}_{n,\alpha,\rho}^{M,1}((t-x)^2;x) &=& \frac{1}{(n\rho+2)(n\rho+3)} \left[x(1-x)\rho(1+\rho)(2a_0(n)+a_1(n))n\right.\\
&& -x(1-x)\left(4a_0(n)(3+3\rho+\alpha{\rho}^2)+2a_1(n)(3+6\rho+{\rho}^2(1+\alpha))\right)\\
&& \left.+ a_0(n)(4+3\rho+{\rho}^2)+a_1(n)(2+3\rho+{\rho}^2)\right];\\
\textit{J}_{n,\alpha,\rho}^{M,1}((t-x)^4;x) &=& \frac{1}{(n\rho+2)(n\rho+3)(n\rho+4)(n\rho+5)} \left[3{\rho}^2(1+{\rho})^2x^2(1-x)^2(2a_0(n)+a_1(n))n^2\right.\\
&& + n\left\{-12\alpha {\rho}^3(1+\rho)x^2(1-x)^2(2a_0(n)+a_1(n))\right.\\
&& + a_0(n)x(1-x)\rho[52+196\rho+56{\rho}^2+12{\rho}^3-x(1-x)(172+344\rho+200{\rho}^2+28{\rho}^3)]\\
&& + \left.a_1(n)x(1-x)\rho[26+61\rho+46{\rho}^2+11{\rho}^3-x(1-x)(86+224\rho+172{\rho}^2+34{\rho}^3)]\right\}\\
&& + \left\{\alpha x(1-x)\left[{\rho}^2(-70+240x(1-x))(2a_0(n)+a_1(n))\right.\right.\\
&& + 60x{\rho}^3(-1+4x(1-x))(3a_0(n)+a_1(n))\\
&& + \left.{\rho}^4(a_0(n)(-64+288x(1-x))+a_1(n)(-50+216x(1-x)))\right]\\
&& + a_0(n)(48+50\rho+35{\rho}^2+10{\rho}^3+{\rho}^4+x(-240-320\rho-120{\rho}^2+80{\rho}^3+48{\rho}^4)\\
&& +x^2(480+800\rho+120{\rho}^2-560{\rho}^3-288{\rho}^4)+x^3(-480-960\rho+960{\rho}^3+480{\rho}^4)\\
&& +x^4(240+480\rho-480{\rho}^3-240{\rho}^4))\\
&& + a_1(n)(24+50\rho+35{\rho}^2+10{\rho}^3+{\rho}^4+x(-120-320\rho-190{\rho}^2+20{\rho}^3+34{\rho}^4)\\
&& +x^2(240+800\rho+430{\rho}^2-260{\rho}^3-202{\rho}^4)+x^3(-240-960\rho-480{\rho}^2+480{\rho}^3+336{\rho}^4)\\
&& +\left.\left.x^4(120+480\rho+240{\rho}^2-240{\rho}^3-168{\rho}^4))\right\}\right].
\end{eqnarray*}
\end{lemma}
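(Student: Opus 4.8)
The plan is to derive all three central moments from Lemma \ref{l1} by exploiting the linearity of $\textit{J}_{n,\alpha,\rho}^{M,1}$. Expanding $(t-x)^m=\sum_{j=0}^{m}{m\choose j}(-x)^{m-j}t^{j}$ and applying the operator termwise,
\begin{eqnarray*}
\textit{J}_{n,\alpha,\rho}^{M,1}((t-x)^m;x)=\sum_{j=0}^{m}{m\choose j}(-x)^{m-j}\,\textit{J}_{n,\alpha,\rho}^{M,1}(e_j;x),\qquad m=1,2,4.
\end{eqnarray*}
For $m=1$ and $m=2$ this is an immediate combination of $\textit{J}_{n,\alpha,\rho}^{M,1}(e_0;x)$, $\textit{J}_{n,\alpha,\rho}^{M,1}(e_1;x)$, $\textit{J}_{n,\alpha,\rho}^{M,1}(e_2;x)$, which are already available; in particular the $(2a_0(n)+a_1(n))$--terms cancel, and putting the remaining fractions over the common denominator $(n\rho+2)(n\rho+3)$ produces the first two stated formulas. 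For $m=4$ I additionally need $\textit{J}_{n,\alpha,\rho}^{M,1}(e_3;x)$ and $\textit{J}_{n,\alpha,\rho}^{M,1}(e_4;x)$, computed in exactly the same way as the moments in Lemma \ref{l1}.

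To obtain those two extra moments, I substitute the defining relation $p_{n,k,\alpha}^{M,1}(x)=a(x,n)p_{n-1,k,\alpha}(x)+a(1-x,n)p_{n-1,k-1,\alpha}(x)$ into $(\ref{a1})$ and interchange the finite sum with the integral. The inner integrals are the raw moments of the P\u{a}lt\u{a}nea kernel, which by the Beta--function identity equal
\begin{eqnarray*}
\int_0^1 \mu_{n,k}^\rho(t)\,t^{j}\,dt=\frac{B(k\rho+1+j,(n-k)\rho+1)}{B(k\rho+1,(n-k)\rho+1)}=\frac{(k\rho+1)(k\rho+2)\cdots(k\rho+j)}{(n\rho+2)(n\rho+3)\cdots(n\rho+j+1)},
\end{eqnarray*}
a polynomial of degree $j$ in $k$. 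After the shift $k\mapsto k+1$ in the sum multiplying $a(1-x,n)$, everything is expressed through the power sums $\sum_{k}p_{n-1,k,\alpha}(x)\,k^{i}$ with $i\le j$, i.e.\ through the ordinary moments $\mathcal{T}_{n-1}^{\alpha}(e_i;x)$ of the $\alpha$--Bernstein operator, which are known from \cite{CHE}. Collecting powers of $x$ then yields $\textit{J}_{n,\alpha,\rho}^{M,1}(e_3;x)$ and $\textit{J}_{n,\alpha,\rho}^{M,1}(e_4;x)$ as explicit rational functions of $n$ with $(n\rho+2)(n\rho+3)(n\rho+4)$ and $(n\rho+2)(n\rho+3)(n\rho+4)(n\rho+5)$ in the denominators, respectively.

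It then remains to insert the five moments $\textit{J}_{n,\alpha,\rho}^{M,1}(e_j;x)$, $0\le j\le 4$, into the binomial identity above, expand, and regroup the resulting polynomial in $x$ (with coefficients in $a_0(n),a_1(n),\rho,\alpha,n$) so as to display the factor $x(1-x)$ wherever it occurs; this gives the three closed forms in the statement. A useful built--in check is that setting $a_0(n)=1$ and $a_1(n)=-1$ must reproduce the corresponding central moments of $\textit{Q}_{n,\rho}^{\alpha}$ obtained in \cite{KG}.

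The difficulty here is purely computational rather than structural: the fourth central moment is a dense polynomial carrying terms as large as $n^2\rho^4x^2(1-x)^2$, so carrying out the expansion and the final factorisation by hand is extremely error-prone. I would therefore perform (and at least cross-check) the symbolic manipulation with a computer algebra system --- Maple, in line with the numerical verification of Section 5 --- and record only the simplified outcome.
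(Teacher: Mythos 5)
Your outline is correct and is essentially the only route available: the paper states Lemma \ref{l2} without proof, treating it as a direct (machine-assisted) computation, and your reduction of the central moments to the raw moments $\textit{J}_{n,\alpha,\rho}^{M,1}(e_j;x)$, $0\le j\le 4$, via the binomial expansion, the Beta-function identity for $\int_0^1\mu_{n,k}^\rho(t)t^j\,dt$, the index shift in the $a(1-x,n)$-sum, and the moments of the $\alpha$-Bernstein basis is exactly how such formulas are obtained (and the check $a_0(n)=1$, $a_1(n)=-1$ against the moments of $\textit{Q}_{n,\rho}^{\alpha}$ is the right sanity test). The only small caveat is that the third- and fourth-order moments of $\mathcal{T}_{n-1}^{\alpha}$ are not all listed explicitly in \cite{CHE}, so they must be computed by the same elementary means before the final CAS-assisted simplification.
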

To obtain uniform convergence of the operators (\ref{a1}),  throughout this paper, we will assume for the sequences $a_i(n), i=0,1$
\begin{eqnarray}\label{eq1}
2a_0(n)+a_1(n)=1.
\end{eqnarray}
Depending on the choices of the sequences, we get two cases which are given by:\\
Case 1. Let
\begin{eqnarray}\label{eq2}
a_0(n)\geq 0, a_0(n)+a_1(n)\geq 0.
\end{eqnarray}
From this, we get $0\leq a_0(n)\leq1$ and $-1\leq a_1(n)\leq1.$ So, both sequences are bounded. Also, the operators (\ref{a1}) are positive for this case.\\
Case 2. Let
\begin{eqnarray}\label{eq3}
a_0(n)<0 \,\,\mbox{or}\,\, a_1(n)+a_0(n)<0.
\end{eqnarray}
If $a_0(n)<0,$ then $a_1(n)+a_0(n)>1$ and if $a_0(n)+a_1(n)<0,$ then $a_0(n)>1.$ In this case the operators (\ref{a1}) are not positive.\\
Firstly, we prove the basic convergence and asymptotic results for case 1.

\begin{thm}\label{t1}
Let $f\in C[0,1].$ If $a_0(n),a_1(n)$ satisfy the conditions (\ref{eq1}) and (\ref{eq2}), then
$$\lim_{n\rightarrow\infty} \textit{J}_{n,\alpha,\rho}^{M,1}(f,x)=f(x),$$
uniformly on [0,1].
\end{thm}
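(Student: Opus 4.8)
The natural route is the Bohman–Korovkin theorem: to prove uniform convergence of the positive linear operators $\textit{J}_{n,\alpha,\rho}^{M,1}$ on $C[0,1]$, it suffices to check that $\textit{J}_{n,\alpha,\rho}^{M,1}(e_i;x)\to e_i(x)$ uniformly on $[0,1]$ for $i=0,1,2$. Under the normalization $(\ref{eq1})$ we have $2a_0(n)+a_1(n)=1$, so Lemma \ref{l1} gives immediately $\textit{J}_{n,\alpha,\rho}^{M,1}(e_0;x)=1$ for all $n$, and the leading terms of $\textit{J}_{n,\alpha,\rho}^{M,1}(e_1;x)$ and $\textit{J}_{n,\alpha,\rho}^{M,1}(e_2;x)$ are exactly $x$ and $x^2$. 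So the whole argument reduces to showing that the remaining (error) terms in the formulas of Lemma \ref{l1} tend to $0$ uniformly in $x\in[0,1]$.

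First I would record positivity: under $(\ref{eq2})$ we have $0\le a_0(n)\le 1$ and $-1\le a_1(n)\le 1$ (stated in the excerpt), hence $a(x,n)=a_1(n)x+a_0(n)$ and $a(1-x,n)$ are both nonnegative on $[0,1]$, the basis functions $p_{n,k,\alpha}^{M,1}(x)$ are nonnegative, and the measures $\mu_{n,k}^\rho(t)\,dt$ are probability measures; therefore $\textit{J}_{n,\alpha,\rho}^{M,1}$ is a positive linear operator and Bohman–Korovkin applies. Second, using the boundedness of $a_0(n),a_1(n)$, I would bound each bracketed error term. For $e_1$: the correction is $\frac{1}{n\rho+2}(1-2x)\bigl(a_0(n)(\rho+2)+a_1(n)(\rho+1)\bigr)$; since $|1-2x|\le 1$ and the factor $a_0(n)(\rho+2)+a_1(n)(\rho+1)$ is bounded by a constant depending only on $\rho$, this is $O(1/n)$ uniformly. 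For $e_2$: in the bracket, the coefficient of $n$ is $\rho x(3-5x)(2a_0(n)+a_1(n))+\rho^2 x\bigl(a_0(n)(4-6x)+a_1(n)(3-5x)\bigr)$, which is bounded uniformly in $x$ and $n$; divided by $(n\rho+2)(n\rho+3)$ it is $O(1/n)$. The remaining terms in the bracket are bounded uniformly in $x,n$, so after division they are $O(1/n^2)$. Hence $\textit{J}_{n,\alpha,\rho}^{M,1}(e_2;x)=x^2+O(1/n)$ uniformly.

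Putting these together, $\|\textit{J}_{n,\alpha,\rho}^{M,1}(e_i)-e_i\|_{C[0,1]}\to 0$ for $i=0,1,2$, so by the Bohman–Korovkin theorem $\textit{J}_{n,\alpha,\rho}^{M,1}(f,\cdot)\to f$ uniformly on $[0,1]$ for every $f\in C[0,1]$.

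**Main obstacle.** There is no serious obstacle; the only thing to be careful about is verifying uniform (not merely pointwise in $x$) boundedness of the coefficient polynomials appearing in Lemma \ref{l1}, using that $a_0(n),a_1(n)$ are bounded sequences — this is exactly where hypothesis $(\ref{eq2})$ (via the derived bounds $0\le a_0(n)\le1$, $-1\le a_1(n)\le 1$) is needed, whereas hypothesis $(\ref{eq1})$ is what makes the leading coefficients come out to be exactly $1$. An alternative, slightly more streamlined presentation would invoke Lemma \ref{l2} directly: $(\ref{eq1})$ plus boundedness gives $\textit{J}_{n,\alpha,\rho}^{M,1}((t-x)^2;x)\to 0$ uniformly, and then the standard positive-operator estimate $|\textit{J}_{n,\alpha,\rho}^{M,1}(f;x)-f(x)|\le |f(x)|\,|\textit{J}_{n,\alpha,\rho}^{M,1}(e_0;x)-1| + \bigl(\textit{J}_{n,\alpha,\rho}^{M,1}(e_0;x)+1\bigr)\,\omega\!\left(f;\sqrt{\textit{J}_{n,\alpha,\rho}^{M,1}((t-x)^2;x)}\,\right)$ in terms of the modulus of continuity $\omega(f;\cdot)$ finishes the proof; I would likely present this second version since the central-moment estimates are already available from Lemma \ref{l2}.
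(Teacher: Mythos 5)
Your proposal is correct and follows essentially the same route as the paper: the paper's proof is the one-line observation that under (\ref{eq1})--(\ref{eq2}) the operators are positive, so the Korovkin theorem together with the moments of Lemma \ref{l1} yields uniform convergence. You simply make explicit the uniform $O(1/n)$ bounds on the correction terms (using the boundedness $0\le a_0(n)\le 1$, $-1\le a_1(n)\le 1$), which the paper leaves implicit.
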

\begin{proof} Since for the conditions on $a_i(n), i=0,1$ the operators (\ref{a1}) are positive. So, by Korovkin theorem and Lemma \ref{l1}, we can find the uniform convergence of the operators.
\end{proof}

\begin{thm}\label{t2}
Let $a_i(n), i=0,1$ are convergent sequences satisfying the conditions (\ref{eq1})-(\ref{eq2}) and $l_i=\displaystyle\lim_{n \rightarrow \infty}a_i(n).$ If $f^{\prime\prime} \in C[0,1],$ then:
\begin{eqnarray*}
\lim_{n\rightarrow \infty}n(\textit{J}_{n,\alpha,\rho}^{M,1}(f;x)-f(x))=\frac{(1-2x)((\rho+2)l_0+(\rho+1)l_1)}{\rho}f^\prime(x)
+\frac{x(1-x)(1+\rho)(2l_0+l_1)}{2\rho}f^{\prime\prime}(x),
\end{eqnarray*}
uniformly on [0,1].
\end{thm}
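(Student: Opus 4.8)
The plan is to invoke the classical Voronovskaja-type machinery: expand $f$ by Taylor's theorem around $x$ to second order, apply the operator $\textit{J}_{n,\alpha,\rho}^{M,1}$ term by term, and control the remainder using the fourth central moment from Lemma \ref{l2}. Concretely, I would write
\begin{eqnarray*}
f(t)=f(x)+(t-x)f'(x)+\tfrac{1}{2}(t-x)^2 f''(x)+(t-x)^2 \psi_x(t),
\end{eqnarray*}
where $\psi_x$ is bounded on $[0,1]$, continuous, and satisfies $\psi_x(t)\to 0$ as $t\to x$ (this uses $f''\in C[0,1]$). Applying the operator and using linearity gives
\begin{eqnarray*}
n\big(\textit{J}_{n,\alpha,\rho}^{M,1}(f;x)-f(x)\big)
&=& f(x)\,n\big(\textit{J}_{n,\alpha,\rho}^{M,1}(e_0;x)-1\big)
+ f'(x)\,n\,\textit{J}_{n,\alpha,\rho}^{M,1}(t-x;x)\\
&& +\ \tfrac{1}{2}f''(x)\,n\,\textit{J}_{n,\alpha,\rho}^{M,1}((t-x)^2;x)
+ n\,\textit{J}_{n,\alpha,\rho}^{M,1}\big((t-x)^2\psi_x(t);x\big).
\end{eqnarray*}

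Next I would evaluate the three main terms using the condition (\ref{eq1}), which forces $2a_0(n)+a_1(n)=1$, so the first term vanishes identically. For the second term, Lemma \ref{l2} gives $n\,\textit{J}_{n,\alpha,\rho}^{M,1}(t-x;x)=\frac{n}{n\rho+2}(1-2x)\big(a_0(n)(\rho+2)+a_1(n)(\rho+1)\big)$, whose limit is $\frac{1}{\rho}(1-2x)\big((\rho+2)l_0+(\rho+1)l_1\big)$, the convergence being uniform on $[0,1]$ since the $a_i(n)$ converge and the polynomial factor is bounded. For the third term, the leading $n^2$-coefficient inside $n\,\textit{J}_{n,\alpha,\rho}^{M,1}((t-x)^2;x)$ survives: $\frac{n^2}{(n\rho+2)(n\rho+3)}\to \frac{1}{\rho^2}$, so after multiplying by $\tfrac12 f''(x)$ one gets $\frac{x(1-x)(1+\rho)(2l_0+l_1)}{2\rho}f''(x)$, again uniformly, while all lower-order terms in that central moment carry an extra factor $n^{-1}$ and die.

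The main obstacle — as always in Voronovskaja arguments — is showing the remainder term $n\,\textit{J}_{n,\alpha,\rho}^{M,1}\big((t-x)^2\psi_x(t);x\big)$ vanishes uniformly. The standard route: given $\varepsilon>0$, pick $\delta>0$ with $|\psi_x(t)|<\varepsilon$ for $|t-x|<\delta$, and split the integral accordingly. On the near region the contribution is bounded by $\varepsilon\, n\,\textit{J}_{n,\alpha,\rho}^{M,1}((t-x)^2;x)$, which is uniformly bounded by Lemma \ref{l2} (and tends to a finite limit), hence bounded by $C\varepsilon$. On the far region $|t-x|\ge\delta$, one uses $|\psi_x(t)|\le M$ together with the Cauchy–Schwarz-type estimate $(t-x)^2\mathbf{1}_{|t-x|\ge\delta}\le \delta^{-2}(t-x)^4$, so the far contribution is at most $\frac{M}{\delta^2}\, n\,\textit{J}_{n,\alpha,\rho}^{M,1}((t-x)^4;x)$; by Lemma \ref{l2} the fourth central moment is $O(n^{-2})$ (its leading term is $O(n^2)$ divided by the $O(n^4)$ denominator), so after multiplying by $n$ this is $O(n^{-1})\to 0$. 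Since the bounds in both regions are uniform in $x\in[0,1]$ (all moment expressions are polynomials in $x$ with coefficients depending only on $n,\alpha,\rho$ and the bounded sequences $a_i(n)$), letting $n\to\infty$ and then $\varepsilon\to 0$ completes the proof. One subtlety worth a remark is that $\psi_x$ depends on $x$, so uniformity requires the standard observation that the modulus of continuity of $f''$ controls $\sup_x|\psi_x(t)|$ on $|t-x|<\delta$ uniformly; I would phrase the remainder estimate via $\omega(f'';\delta)$ to make this transparent.
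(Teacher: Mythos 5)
Your proposal is correct, and the computation of the main terms (vanishing of the $e_0$ term via (\ref{eq1}), the limits of $n\,\textit{J}_{n,\alpha,\rho}^{M,1}(t-x;x)$ and $\tfrac{n}{2}\textit{J}_{n,\alpha,\rho}^{M,1}((t-x)^2;x)$ from Lemma \ref{l2}) is exactly what the paper does. Where you diverge is the treatment of the remainder $n\,\textit{J}_{n,\alpha,\rho}^{M,1}\big(\psi_x(t)(t-x)^2;x\big)$: the paper applies the Cauchy--Schwarz inequality for the (positive, under (\ref{eq1})--(\ref{eq2})) operator, bounding the remainder by $\sqrt{\textit{J}_{n,\alpha,\rho}^{M,1}(\theta^2(t,x);x)}\,\sqrt{\textit{J}_{n,\alpha,\rho}^{M,1}((t-x)^4;x)}$, and then invokes Theorem \ref{t1} to get $\textit{J}_{n,\alpha,\rho}^{M,1}(\theta^2(t,x);x)\to 0$ together with the $O(n^{-2})$ fourth central moment; you instead use the classical $\varepsilon$--$\delta$ splitting, bounding the near part by $\varepsilon\,n\,\textit{J}_{n,\alpha,\rho}^{M,1}((t-x)^2;x)$ and the far part by $\tfrac{M}{\delta^2}\,n\,\textit{J}_{n,\alpha,\rho}^{M,1}((t-x)^4;x)=O(n^{-1})$. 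Both arguments rely on positivity of the operator in Case 1 (yours for the monotonicity needed in the splitting, the paper's for Cauchy--Schwarz and the Korovkin-based Theorem \ref{t1}), and both use the same moment estimates from Lemma \ref{l2}; interestingly, your splitting argument is precisely the technique the paper itself deploys later for the non-positive Case 2 (Theorem \ref{t4}), so it adapts more directly to that setting. Your route is also slightly more careful on a point the paper glosses over: since $\theta(t,x)$ (your $\psi_x$) depends on $x$, uniformity of the remainder estimate requires controlling $\sup_x|\psi_x(t)|$ for $|t-x|<\delta$ via the modulus of continuity $\omega(f'';\delta)$, which you state explicitly, whereas the paper applies Theorem \ref{t1} to the $x$-dependent family $\theta^2(\cdot,x)$ without comment. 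No gaps; the proof stands.
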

\begin{proof} By the Taylor's formula, we have
$$f(t)=f(x)+(t-x)f^{\prime}(x)+\frac{1}{2}(t-x)^2 f^{\prime\prime}(x)+\theta(t,x)(t-x)^2.$$
Apply the operators $\textit{J}_{n,\alpha,\rho}^{M,1}(.,x)$ on Taylor's formula, we get:
\begin{eqnarray*}
\textit{J}_{n,\alpha,\rho}^{M,1}(f;x)-f(x)=\textit{J}_{n,\alpha,\rho}^{M,1}((t-x);x)f^{\prime}(x)
+\frac{1}{2}\textit{J}_{n,\alpha,\rho}^{M,1}((t-x)^2);x)f^{\prime\prime}(x)+\textit{J}_{n,\alpha,\rho}^{M,1}(\theta(t,x)(t-x)^2;x),
\end{eqnarray*}
where $\theta(t,x) \in C[0,1]$ and $\displaystyle\lim_{t\rightarrow x} \displaystyle\theta(t,x)=0.$\\
Using Cauchy-Schwarz inequality on the last term of the above equation, we obtain
\begin{eqnarray*}
\textit{J}_{n,\alpha,\rho}^{M,1}(\theta(t,x)(t-x)^2;x)\leq \sqrt{\textit{J}_{n,\alpha,\rho}^{M,1}({\theta}^2(t,x);x)} \sqrt{\textit{J}_{n,\alpha,\rho}^{M,1}((t-x)^4;x)}.
\end{eqnarray*}
Since ${\theta}^2(x,x)=0$ and ${\theta}^2(t,x) \in C[0,1],$ and using Theorem \ref{t1}, then we have $\displaystyle\lim_{n \rightarrow \infty} \textit{J}_{n,\alpha,\rho}^{M,1}({\theta}^2(t,x);x)=0$ uniformly on [0,1].\\

Hence, from Lemma \ref{l2}, we obtain
$$n\lim_{n\rightarrow \infty}\textit{J}_{n,\alpha,\rho}^{M,1}(\theta(t,x)(t-x)^2;x)=0.$$
By using the central moments from Lemma \ref{l2}, we get the required result.
\end{proof}

Now, we have the convergence and asymptotic results for the case 2:
\begin{thm}\label{t3}
Let $f\in C[0,1]$ and $a_i(n), i=0,1$ be a convergent sequences which satisfy the conditions (\ref{eq1}) and (\ref{eq3}). Then:
$$\lim_{n\rightarrow\infty} \textit{J}_{n,\alpha,\rho}^{M,1}(f,x)=f(x),$$
uniformly on [0,1].
\end{thm}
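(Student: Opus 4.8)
\medskip
\noindent\emph{Proof proposal.} The obstruction here is that under (\ref{eq3}) the operators $\textit{J}_{n,\alpha,\rho}^{M,1}$ are no longer positive, so the Korovkin argument used for Theorem \ref{t1} is unavailable. The plan is to exhibit $\textit{J}_{n,\alpha,\rho}^{M,1}$ as a linear combination, with bounded coefficients, of two genuinely positive operators that each converge to the identity, and then to let the two errors cancel by means of (\ref{eq1}).

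First I would split the basis function. Since $a(x,n)=a_1(n)x+a_0(n)$, regrouping gives
$$p_{n,k,\alpha}^{M,1}(x)=a_0(n)\bigl[p_{n-1,k,\alpha}(x)+p_{n-1,k-1,\alpha}(x)\bigr]+a_1(n)\bigl[x\,p_{n-1,k,\alpha}(x)+(1-x)\,p_{n-1,k-1,\alpha}(x)\bigr].$$
Consequently $\textit{J}_{n,\alpha,\rho}^{M,1}(f,x)=a_0(n)\,U_n(f,x)+a_1(n)\,V_n(f,x)$, where $U_n$ and $V_n$ are defined exactly as in (\ref{a1}) but with the basis function $p_{n,k,\alpha}^{M,1}(x)$ replaced, respectively, by $p_{n-1,k,\alpha}(x)+p_{n-1,k-1,\alpha}(x)$ and by $x\,p_{n-1,k,\alpha}(x)+(1-x)\,p_{n-1,k-1,\alpha}(x)$. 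Since $0\le\alpha\le1$ makes every $\alpha$-Bernstein polynomial nonnegative on $[0,1]$ and $\mu_{n,k}^\rho\ge0$, both $U_n$ and $V_n$ are positive linear operators, with $U_n(e_0;x)=2$ and $V_n(e_0;x)=1$.

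Next I would nail down the test-function limits. The identities in Lemma \ref{l1} hold for arbitrary sequences $a_0(n),a_1(n)$, so evaluating them at $(a_0(n),a_1(n))=(1,0)$ and at $(0,1)$ produces closed forms for $U_n(e_i;x)$ and $V_n(e_i;x)$, $i=0,1,2$; inspection shows that $U_n(e_i;x)-2e_i(x)$ and $V_n(e_i;x)-e_i(x)$ are $O(1/n)$ uniformly in $x\in[0,1]$. Hence $\tfrac12U_n(e_i;\cdot)\to e_i$ and $V_n(e_i;\cdot)\to e_i$ uniformly, and Korovkin's theorem, applied to the positive operators $\tfrac12U_n$ and $V_n$, yields $U_n(f;\cdot)\to 2f$ and $V_n(f;\cdot)\to f$ uniformly on $[0,1]$ for every $f\in C[0,1]$.

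Finally I would assemble the bound. From (\ref{eq1}) we have $f(x)=a_0(n)\cdot 2f(x)+a_1(n)f(x)$, so
$$\textit{J}_{n,\alpha,\rho}^{M,1}(f;x)-f(x)=a_0(n)\bigl(U_n(f;x)-2f(x)\bigr)+a_1(n)\bigl(V_n(f;x)-f(x)\bigr),$$
and therefore $\bigl\|\textit{J}_{n,\alpha,\rho}^{M,1}(f)-f\bigr\|_{C[0,1]}\le|a_0(n)|\,\|U_n(f)-2f\|_{C[0,1]}+|a_1(n)|\,\|V_n(f)-f\|_{C[0,1]}$. Since $a_0(n),a_1(n)$ are convergent, hence bounded, and each norm on the right tends to $0$ by the previous step, the left side tends to $0$, which is the assertion. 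Condition (\ref{eq3}) plays no active role in the argument; it merely records that we are outside the positive regime, and the proof uses only (\ref{eq1}) and the convergence of the $a_i(n)$. The crux is the decomposition $\textit{J}_{n,\alpha,\rho}^{M,1}=a_0(n)U_n+a_1(n)V_n$ together with the observation that (\ref{eq1}) is precisely what makes the (possibly large, possibly sign-changing) coefficients multiply quantities that individually converge to zero; once this is in place the test-function checks for $U_n$ and $V_n$ are mere bookkeeping via Lemma \ref{l1}.
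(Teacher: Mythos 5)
Your proof is correct, but it follows a genuinely different route from the paper's. The paper also escapes the loss of positivity by a decomposition, but it writes $\textit{J}_{n,\alpha,\rho}^{M,1}=\textit{K}_{n,\alpha,\rho}^{M,1}-\textit{L}_{n,\alpha,\rho}^{M,1}$, where the two pieces carry the sequences inside their (coefficient) basis functions, namely $a_1(n)x\,p_{n-1,k,\alpha}+a_1(n)p_{n-1,k-1,\alpha}$ and $-a_0(n)p_{n-1,k,\alpha}+(a_1(n)x-a_0(n))p_{n-1,k-1,\alpha}$; it then computes the moments of $K$ and $L$, invokes the extended Korovkin theorem of Khosravian-Arab et al.\ (limits of the form $h(x)f(x)$, here $l_1(1+x)f(x)$ and $(l_1(1+x)-1)f(x)$), and subtracts, which is why convergence of $a_1(n)$ to $l_1$ and the sign information from (\ref{eq3}) (positivity of $K$ and $L$, tied to $a_1(n)>0$ in the subcase $a_0(n)<0$) enter the argument. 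You instead factor the sequences out as scalar multipliers, $\textit{J}_{n,\alpha,\rho}^{M,1}=a_0(n)U_n+a_1(n)V_n$, where $U_n$ and $V_n$ are positive operators with coefficient functions free of $a_i(n)$, so their moments follow from Lemma \ref{l1} by specializing $(a_0,a_1)=(1,0)$ and $(0,1)$, the classical Korovkin theorem applies to $\tfrac12U_n$ and $V_n$, and (\ref{eq1}) makes the errors recombine as $a_0(n)(U_n f-2f)+a_1(n)(V_nf-f)$ with bounded coefficients. What your version buys: it needs only boundedness of the $a_i(n)$ rather than convergence, it does not need any extended Korovkin theorem, it is insensitive to the sign conditions, and in particular it covers Case 1 and both subcases of Case 2 simultaneously (so Theorems \ref{t1} and \ref{t3} follow from one argument); what the paper's version buys is explicit limit operators $K,L$ with their own moment formulas, which it reuses in the asymptotic analysis of Case 2. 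Your observation that (\ref{eq3}) plays no active role is accurate and is a genuine simplification relative to the paper's presentation.
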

\begin{proof} Our operators can be written as:
\begin{eqnarray}\label{m1}
\textit{J}_{n,\alpha,\rho}^{M,1}(f,x)=\textit{K}_{n,\alpha,\rho}^{M,1}(f,x)-\textit{L}_{n,\alpha,\rho}^{M,1}(f,x)
\end{eqnarray}
where
\begin{eqnarray*}
\textit{K}_{n,\alpha,\rho}^{M,1}(f,x) &=& \sum_{k=0}^{n}[a_1(n)~x~ p_{n-1,k,\alpha}(x)+a_1(n)~ p_{n-1,k-1,\alpha}(x)]\int_{0}^1 \mu_{n,k}^\rho(t) f(t)dt,\\
\textit{L}_{n,\alpha,\rho}^{M,1}(f,x) &=& \sum_{k=0}^n [-a_0(n)~p_{n-1,k,\alpha}(x)+(a_1(n)x-a_0(n))~p_{n-1,k-1,\alpha}(x)]\int_{0}^1 \mu_{n,k}^\rho(t) f(t)dt.
\end{eqnarray*}
Since both the operators i.e. $\textit{K}_{n,\alpha,\rho}^{M,1}(f,x)$ and $\textit{L}_{n,\alpha,\rho}^{M,1}(f,x)$ are positive. So, we can apply extended Korovkin theorem (\cite{KA7}, see page 122) on it. The moments of these operators are given below:
\begin{eqnarray*}
\textit{K}_{n,\alpha,\rho}^{M,1}(e_0,x) &=& a_1(n)(1+x),\\
\textit{K}_{n,\alpha,\rho}^{M,1}(e_1,x) &=& a_1(n)(1+x)\left[\frac{(n-1)\rho x}{n\rho+2}\right]+\frac{a_1(n)x}{n\rho+2}+\frac{a_1(n)(\rho+1)}{n\rho+2},\\
\textit{K}_{n,\alpha,\rho}^{M,1}(e_2,x) &=& \frac{a_1(n)(1+x)}{(n\rho+2)(n\rho+3)} \left[{\rho}^2(n-1)^2 \left(x^2+\frac{n-1+2(1-\alpha)}{(n-1)^2} x(1-x)\right)+3 \rho (n-1)x+2\right]\\
&& + \frac{a_1(n)}{(n\rho+2)(n\rho+3)} \left[2{\rho}^2(n-1)x+{\rho}^2+3\rho \right],\\
\textit{L}_{n,\alpha,\rho}^{M,1}(e_0,x) &=& a_1(n)x-2a_0(n),\\
\textit{L}_{n,\alpha,\rho}^{M,1}(e_1,x) &=& (a_1(n)x-2a_0(n))\left[\frac{(n-1)\rho x}{n\rho+2}\right]
- \frac{a_0(n)(\rho+2)}{n\rho+2}+\frac{a_1(n)x(\rho+1)}{n\rho+2},\\
\textit{L}_{n,\alpha,\rho}^{M,1}(e_2,x) &=& \frac{a_1(n)x-2a_0(n)}{(n\rho+2)(n\rho+3)}[{\rho}^2(n-1)^2x^2+{\rho}^2(n-1+2(1-\alpha))x(1-x)+3\rho(n-1)x+2]\\
&& + \frac{a_1(n)x-a_0(n)}{(n\rho+2)(n\rho+3)}[2{\rho}^2(n-1)x+{\rho}^2+3\rho].
\end{eqnarray*}
Since $a_1(n)$ is convergent, assume $ \displaystyle\lim_{n\rightarrow\infty} a_1(n)= l_1,$ therefore we obtain
$$\lim_{n\rightarrow\infty} \textit{K}_{n,\alpha,\rho}^{M,1}(f;x)=l_1(1+x)f(x) \,\,\mbox{uniformly on}\, [0,1],$$
$$\lim_{n\rightarrow\infty} \textit{L}_{n,\alpha,\rho}^{M,1}(f;x)=[l_1(1+x)-1]f(x)\,\,\mbox{uniformly on}\, [0,1].$$
By using both the above limits and equation (\ref{m1}), we get the desired result.
\end{proof}

\begin{thm}\label{t4}
Let $a_i(n), i=0,1$ are convergent sequences satisfying the conditions (\ref{eq1}), (\ref{eq3}), and $l_i=\displaystyle\lim_{n \rightarrow \infty}a_i(n).$ If $f^{\prime\prime} \in C[0,1],$ then:
\begin{eqnarray*}
\lim_{n\rightarrow \infty}n(\textit{J}_{n,\alpha,\rho}^{M,1}(f;x)-f(x))= \frac{(1-2x)((\rho+2)l_0+(\rho+1)l_1)}{\rho}f^\prime(x)+\frac{x(1-x)(1+\rho)(2l_0+l_1)}{2\rho}f^{\prime\prime}(x),
\end{eqnarray*}
uniformly on [0,1].
\end{thm}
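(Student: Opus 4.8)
The plan is to follow the proof of Theorem \ref{t2} almost verbatim, the essential new point being that under conditions (\ref{eq1}) and (\ref{eq3}) the operators $\textit{J}_{n,\alpha,\rho}^{M,1}$ are no longer positive, so the Cauchy--Schwarz step used there cannot be applied to $\textit{J}_{n,\alpha,\rho}^{M,1}$ directly. As before, write the second-order Taylor expansion
$$f(t)=f(x)+(t-x)f'(x)+\tfrac{1}{2}(t-x)^2 f''(x)+\theta(t,x)(t-x)^2,$$
with $\theta(\cdot,x)\in C[0,1]$ and $\lim_{t\to x}\theta(t,x)=0$, and apply $\textit{J}_{n,\alpha,\rho}^{M,1}(\cdot\,,x)$; since $\textit{J}_{n,\alpha,\rho}^{M,1}(e_0;x)=2a_0(n)+a_1(n)=1$ by (\ref{eq1}), this yields
$$n\big(\textit{J}_{n,\alpha,\rho}^{M,1}(f;x)-f(x)\big)=n\,\textit{J}_{n,\alpha,\rho}^{M,1}(t-x;x)\,f'(x)+\tfrac{n}{2}\,\textit{J}_{n,\alpha,\rho}^{M,1}((t-x)^2;x)\,f''(x)+n\,\textit{J}_{n,\alpha,\rho}^{M,1}(\theta(t,x)(t-x)^2;x).$$

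For the remainder term I would use the decomposition (\ref{m1}), $\textit{J}_{n,\alpha,\rho}^{M,1}=\textit{K}_{n,\alpha,\rho}^{M,1}-\textit{L}_{n,\alpha,\rho}^{M,1}$, into the two \emph{positive} operators of Theorem \ref{t3}. Applying the Cauchy--Schwarz inequality to each of them separately gives
$$\big|\textit{J}_{n,\alpha,\rho}^{M,1}(\theta(t,x)(t-x)^2;x)\big|\le \sqrt{\textit{K}_{n,\alpha,\rho}^{M,1}(\theta^2(t,x);x)}\,\sqrt{\textit{K}_{n,\alpha,\rho}^{M,1}((t-x)^4;x)}+\sqrt{\textit{L}_{n,\alpha,\rho}^{M,1}(\theta^2(t,x);x)}\,\sqrt{\textit{L}_{n,\alpha,\rho}^{M,1}((t-x)^4;x)}.$$
By the uniform limits established inside the proof of Theorem \ref{t3}, one has $\textit{K}_{n,\alpha,\rho}^{M,1}(\theta^2(t,x);x)\to l_1(1+x)\theta^2(x,x)=0$ and $\textit{L}_{n,\alpha,\rho}^{M,1}(\theta^2(t,x);x)\to(l_1(1+x)-1)\theta^2(x,x)=0$ uniformly on $[0,1]$, while a direct computation of the fourth-order central moments of $\textit{K}_{n,\alpha,\rho}^{M,1}$ and $\textit{L}_{n,\alpha,\rho}^{M,1}$ (obtained by extending the moment list in the proof of Theorem \ref{t3} to $e_3$ and $e_4$ and forming the central combination, in which the leading $n^2$ terms cancel) shows that both are $O(n^{-2})$ uniformly on $[0,1]$. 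Hence $n\,\textit{J}_{n,\alpha,\rho}^{M,1}(\theta(t,x)(t-x)^2;x)\to 0$ uniformly on $[0,1]$.

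Finally I would substitute the central moments from Lemma \ref{l2}: since $\textit{J}_{n,\alpha,\rho}^{M,1}(t-x;x)=\frac{(1-2x)(a_0(n)(\rho+2)+a_1(n)(\rho+1))}{n\rho+2}$, we get $n\,\textit{J}_{n,\alpha,\rho}^{M,1}(t-x;x)\to\frac{(1-2x)((\rho+2)l_0+(\rho+1)l_1)}{\rho}$; and since in $\textit{J}_{n,\alpha,\rho}^{M,1}((t-x)^2;x)$ only the term carrying the factor $n$ survives after multiplication by $n$ and division by $(n\rho+2)(n\rho+3)$ (the remaining bounded terms contribute $O(1/n)$), we get $\frac{n}{2}\,\textit{J}_{n,\alpha,\rho}^{M,1}((t-x)^2;x)\to\frac{x(1-x)(1+\rho)(2l_0+l_1)}{2\rho}$. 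Combining the three limits gives the asserted formula, and convergence is uniform because each of the three limits above is uniform on $[0,1]$. Conceptually the only obstacle relative to Theorem \ref{t2} is the loss of positivity, which the splitting (\ref{m1}) circumvents; the only genuinely laborious part is verifying the $O(n^{-2})$ decay of the fourth central moments of $\textit{K}_{n,\alpha,\rho}^{M,1}$ and $\textit{L}_{n,\alpha,\rho}^{M,1}$, a bookkeeping exercise entirely analogous to Lemma \ref{l2}, and that is where I expect the main effort to lie.
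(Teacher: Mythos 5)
Your proposal is correct, but it handles the remainder term by a genuinely different route than the paper. The paper does not reuse the splitting $\textit{J}_{n,\alpha,\rho}^{M,1}=\textit{K}_{n,\alpha,\rho}^{M,1}-\textit{L}_{n,\alpha,\rho}^{M,1}$ at this point; instead it rewrites the operator in the kernel form (\ref{a9}), fixes $\epsilon>0$ and a $\delta$ with $|\theta(t,x)|<\epsilon$ for $|t-x|<\delta$, splits $[0,1]$ into $I_1=(x-\delta,x+\delta)\cap[0,1]$ and its complement, bounds $|a(x,n)|,|a(1-x,n)|\le C$ by boundedness of the convergent sequences, and estimates the two pieces by $\epsilon$ times the explicit second-moment sums (which are $O(1/n)$) plus $M/\delta^2$ times the explicit fourth-moment sums (which are $O(1/n^2)$), so that $n\,\textit{J}_{n,\alpha,\rho}^{M,1}(\theta(t,x)(t-x)^2;x)\to 0$ without ever needing positivity of $\textit{J}_{n,\alpha,\rho}^{M,1}$ or Cauchy--Schwarz. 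Your alternative — Cauchy--Schwarz applied separately to the positive operators $\textit{K}$ and $\textit{L}$ of Theorem \ref{t3}, combined with $\textit{K}(\theta^2;x),\textit{L}(\theta^2;x)\to 0$ and fourth central moments of order $O(n^{-2})$ — is sound and conceptually cleaner, and the ``laborious'' moment computation you anticipate can in fact be short-circuited: $\textit{K}((t-x)^4;x)$ and $\textit{L}((t-x)^4;x)$ are bounded-coefficient combinations of $\sum_k p_{n-1,k,\alpha}(x)\int_0^1\mu_{n,k}^{\rho}(t)(t-x)^4\,dt$ and its index-shifted analogue, and these are exactly the sums the paper bounds by $O(n^{-2})$ in its own proof, so no new cancellation argument is needed. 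Two small caveats: applying the uniform limit of Theorem \ref{t3} to $\theta^2(\cdot,x)$, a function that depends on the evaluation point $x$, strictly requires a quantitative (modulus-of-continuity) Korovkin estimate or an equicontinuity remark — but the paper commits the identical step in Theorem \ref{t2}, so you are at the paper's level of rigor; and in the sub-case $a_0(n)+a_1(n)<0$ the coefficients of $\textit{K}$ and $\textit{L}$ as written are nonpositive, so it is really $-\textit{K},-\textit{L}$ that are positive, which changes nothing in your estimate after taking absolute values. The treatment of the main terms via Lemma \ref{l2} coincides with the paper's.
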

\begin{proof} With the similar lines as in the proof of Theorem \ref{t2}, after applying our operators $\textit{J}_{n,\alpha,\rho}^{M,1}(.,x)$ to the Taylor's formula, it is enough to prove that
$$\lim_{n\rightarrow\infty}n  \textit{J}_{n,\alpha,\rho}^{M,1}(\theta(t,x)(t-x)^2;x)=0.$$
We can rewrite operators (\ref{a1}) in the following way:
\begin{eqnarray}\label{a9}
\textit{J}_{n,\alpha,\rho}^{M,1}(f,x) = \sum_{k=0}^{n-1}p_{n-1,k,\alpha}(x)\left(a(x,n)\int_{0}^1 \mu_{n,k}^\rho(t) f(t)dt+ a(1-x,n)\int_0^1 \mu_{n,k+1}^\rho(t) f(t)dt\right).
\end{eqnarray}
For $\epsilon>0, \,\exists$ a  $\delta>0$ such that $|t-x|<\delta,$ then $|\theta(t,x)|<\epsilon.$\\
Divide  the interval $[0,1]$ into two parts as below:
\begin{eqnarray*}
I_1=(x-\delta,x+\delta) \cap [0,1],\,\qquad \qquad \qquad \qquad \qquad \qquad I_2=[0,1]\setminus(x-\delta,x+\delta).
\end{eqnarray*}
Since $a_i(n), i=0,1$ are convergent, so are bounded. Thus, $\exists \,\, C>0$ such that $|a(x,n)|<C.$\\
\noindent $|\textit{J}_{n,\alpha,\rho}^{M,1}(\theta(t,x)(t-x)^2);x)|$
\begin{eqnarray*}
 &\leq&  C\sum_{k=0}^{n-1}p_{n-1,k,\alpha}(x)
\left( \int_0^1 \mu_{n,k}^\rho(t) |\theta(t,x)|(t-x)^2 dt+\int_0^1 \mu_{n,k+1}^\rho(t)|\theta(t,x)|(t-x)^2 dt\right)\\
&=& C \sum_{k=0}^{n-1}p_{n-1,k,\alpha}(x) \left(\int_{I_1}\mu_{n,k}^\rho(t) |\theta(t,x)|(t-x)^2 dt +\int_{I_2}\mu_{n,k}^\rho(t) |\theta(t,x)|(t-x)^2 dt\right.\\&&\left. +\int_{I_1} \mu_{n,k+1}^\rho(t) |\theta(t,x)|(t-x)^2 dt +\int_{I_2}\mu_{n,k+1}^\rho(t) |\theta(t,x)|(t-x)^2 dtdt\right)\\
&<& C\sum_{k=0}^{n-1} p_{n-1,k,\alpha}(x)\left[\epsilon \left(\int_{I_1}\mu_{n,k}^\rho(t)(t-x)^2 dt+\int_{I_1} \mu_{n,k+1}^\rho(t)(t-x)^2 dt\right)\right.\\
&&\left.+ \frac{M}{{\delta}^2}\left(\int_{I_2}\mu_{n,k}^\rho(t)(t-x)^4 dt+\int_{I_2}\mu_{n,k+1}^\rho(t)(t-x)^4 dt\right)\right], \,\, \mbox{where}\,\, M= \sup_{0\leq t\leq 1} |\theta(t,x)|\\
&\leq & \epsilon C\sum_{k=0}^{n-1}p_{n-1,k,\alpha}(x)\left[\int_0^1\mu_{n,k}^{\rho}(t)(t-x)^2dt+\int_0^1\mu_{n,k+1}^{\rho}(t)(t-x)^2dt\right]\\
&& + \frac{M C}{{\delta}^2}\sum_{k=0}^{n-1}p_{n-1,k,\alpha}(x)\left[\int_0^1\mu_{n,k}^{\rho}(t)(t-x)^4dt+\int_0^1\mu_{n,k+1}^{\rho}(t)(t-x)^4dt\right]\\
&\leq& \frac{\epsilon C}{(n\rho+2)(n\rho+3)} \left[n(x^2(-2{\rho}^2-2\rho)+x(2{\rho}^2+2\rho))+({\rho}^2+3\rho+4)\right.\\
&&- \left.(12+12\rho+4\alpha{\rho}^2)x(1-x)\right]\\
&&+ \frac{M C}{{\delta}^2(n\rho+2)(n\rho+3)(n\rho+4)(n\rho+5)}\left[6{\rho}^2(1+{\rho})^2x^2(1-x)^2n^2\right.\\
&& +n\left\{4x(1-x)\rho(1+\rho)(13+11\rho+13{\rho}^2-(43+43\rho+(7+6\alpha){\rho}^2)x(1-x))\right\}\\
&& + \left\{ {\rho}^4+10{\rho}^3+35{\rho}^2+50\rho+48-240x(1-x)(1-x(1-x))\right.\\
&& - 80x(1-x)(4-6x(1-x))\rho-120x(1-x){\rho}^2\\
&& +80x(1-x)(1-6x(1-x)){\rho}^3+48x(1-x)(1-5x(1-x)){\rho}^4\\
&& -\left.\left.4\alpha x(1-x){\rho}^2(35+45\rho+16{\rho}^2-12(10+15\rho+6{\rho}^2)x(1-x))\right\}\right].
\end{eqnarray*}
Thus, from the last inequality, the proof is completed.
\end{proof}

Next, we prove direct estimates for our operators $\textit{J}_{n,\alpha,\rho}^{M,1}(f,x).$ For this, we need the following definitions:
\begin{definition}{\bf{(first order modulus of continuity)}}
Let $f(x)$ be bounded on $[a,b],$ the modulus of continuity of $f(x)$ on $[a, b],$ is denoted by $\omega(f;\delta),$ and is defined for $\delta>0$ as:
\begin{eqnarray*}
\omega(f;\delta)=\sup_{\substack{(x,y)\in [a,b] \\ |x-y|\leq \delta}} |f(x)-f(y)|,
\end{eqnarray*}
with the following relation for $\lambda>0$
\begin{eqnarray}\label{a8}
\omega(f;\lambda \delta)\leq (1+\lambda)\,\omega(f;\delta).
\end{eqnarray}
\end{definition}

\begin{definition}{\bf{(Lipschitz continuity)}}
A function $f$ satisfies the Lipschitz condition of order $\tau$ with the constant $L$ on $[a, b],$ that is
\begin{eqnarray*}
|f(x_1)-f(x_2)|\leq L\, |x_1-x_2|^\tau,\qquad 0< \tau\leq 1,\quad L>0, \quad \mbox{for}\quad x_1,x_2\in [a,b],
\end{eqnarray*}
iff $\omega(f;\delta)\leq L\, \delta^\tau.$
\end{definition}

\begin{thm}\label{t5}
If $f$ is bounded for $x\in[0,1], \,\, a_0(n)$ is a bounded sequence and $a_i(n), i=0,1$ satisfy the condition (\ref{eq1}) then $||\textit{J}_{n,\alpha,\rho}^{M,1}f-f||\leq (1+3|a_0(n)|) C_3 \,\omega\left(f;\frac{1}{\sqrt n}\right),$ where $||.||$ is the uniform norm over $[0,1], \, \omega(f;\delta)$ is the first order modulus of continuity and $C_3$ is a positive constant.
\end{thm}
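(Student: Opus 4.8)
The plan is to work from the representation (\ref{a9}) of $\textit{J}_{n,\alpha,\rho}^{M,1}$, in which the ``kernel'' $p_{n-1,k,\alpha}(x)\,\mu_{n,k}^\rho(t)$ is genuinely nonnegative for $x\in[0,1]$ and $0\le\alpha\le 1$ (so that $\sum_{k=0}^{n-1}p_{n-1,k,\alpha}(x)=1$), while only the scalar factors $a(x,n)$ and $a(1-x,n)$ may change sign; this is exactly what lets us treat both Case~1 and the non-positive Case~2 at once. First I would record that, by Lemma~\ref{l1} together with (\ref{eq1}), $\textit{J}_{n,\alpha,\rho}^{M,1}(e_0;x)\equiv 1$, whence
\[
\textit{J}_{n,\alpha,\rho}^{M,1}(f;x)-f(x)=\sum_{k=0}^{n-1}p_{n-1,k,\alpha}(x)\Bigl(a(x,n)\!\int_0^1\!\mu_{n,k}^\rho(t)\bigl(f(t)-f(x)\bigr)dt+a(1-x,n)\!\int_0^1\!\mu_{n,k+1}^\rho(t)\bigl(f(t)-f(x)\bigr)dt\Bigr).
\]
Using $a_1(n)=1-2a_0(n)$ from (\ref{eq1}) one gets $a(x,n)=x+a_0(n)(1-2x)$ and $a(1-x,n)=(1-x)-a_0(n)(1-2x)$, so $\max\{|a(x,n)|,|a(1-x,n)|\}\le 1+|a_0(n)|$ on $[0,1]$; since $a_0(n)$ is assumed bounded, these scalar multipliers are bounded uniformly in $n$ and $x$.

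Next I would insert the standard estimate $|f(t)-f(x)|\le\omega(f;|t-x|)\le\bigl(1+|t-x|/\delta\bigr)\,\omega(f;\delta)$ coming from (\ref{a8}) (finite since $f$ is bounded), take absolute values in the display above, and split each summand into a ``$1$''-part and a ``$|t-x|/\delta$''-part. Because $\int_0^1\mu_{n,k}^\rho(t)\,dt=1$ and $\sum_{k}p_{n-1,k,\alpha}(x)=1$, the ``$1$''-part is bounded by $2\bigl(1+|a_0(n)|\bigr)\,\omega(f;\delta)$. For the ``$|t-x|/\delta$''-part I would apply the Cauchy--Schwarz inequality twice: once inside each integral, to replace $\int\mu_{n,k}^\rho|t-x|$ by $\bigl(\int\mu_{n,k}^\rho(t-x)^2\bigr)^{1/2}$, and then over the index $k$ against the probability weights $p_{n-1,k,\alpha}(x)$. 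This reduces matters to controlling the combined second moment
\[
S_n(x):=\sum_{k=0}^{n-1}p_{n-1,k,\alpha}(x)\Bigl[\int_0^1\mu_{n,k}^\rho(t)(t-x)^2\,dt+\int_0^1\mu_{n,k+1}^\rho(t)(t-x)^2\,dt\Bigr].
\]

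The remaining point is that $S_n(x)=O(1/n)$ uniformly in $x\in[0,1]$. But $S_n(x)$ is precisely the quantity that already appears (as the bracket multiplying $\epsilon C$) in the proof of Theorem~\ref{t4}, where it is shown to be $\le C'/n$ for a constant $C'=C'(\rho,\alpha)$; I would simply quote that computation. Then, choosing $\delta=1/\sqrt n$, the ``$|t-x|/\delta$''-part is at most $(1+|a_0(n)|)\,\delta^{-1}\sqrt{S_n(x)}\,\omega(f;\delta)\le(1+|a_0(n)|)\sqrt{C'}\,\omega\bigl(f;1/\sqrt n\bigr)$. Adding the two parts gives $\|\textit{J}_{n,\alpha,\rho}^{M,1}f-f\|\le(2+\sqrt{C'})\bigl(1+|a_0(n)|\bigr)\,\omega(f;1/\sqrt n)$, and since $1+|a_0(n)|\le 1+3|a_0(n)|$ the claim follows with $C_3=2+\sqrt{C'}$. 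The only mildly delicate step is the double Cauchy--Schwarz bookkeeping, which has to be carried out for a non-positive operator directly from (\ref{a9}); the moment estimate itself is routine given Lemma~\ref{l2} and the proof of Theorem~\ref{t4}, and I expect no genuine obstacle beyond keeping every constant uniform in $x$ and $n$.
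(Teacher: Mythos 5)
Your proposal is correct and follows essentially the same route as the paper's proof: bound $|f(t)-f(x)|$ by the modulus of continuity via (\ref{a8}) with $\delta=1/\sqrt n$, control the first-absolute-moment term by Cauchy--Schwarz against the second central moment (which is $O(1/n)$ by Lemma \ref{l2}), and bound $|a(x,n)|,|a(1-x,n)|$ using (\ref{eq1}) and the boundedness of $a_0(n)$. The only differences are cosmetic: you quote the second-moment bound from the proof of Theorem \ref{t4} rather than recomputing it, and your coefficient bound $1+|a_0(n)|$ is slightly sharper than the paper's $1+3|a_0(n)|$, which still yields the stated estimate.
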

\begin{proof} From the definition of our operators and using relation (\ref{a8}), (by taking $\lambda=\sqrt n\, |t-x|, \delta=\frac{1}{\sqrt n}$), we have
\begin{eqnarray*}
|\textit{J}_{n,\alpha,\rho}^{M,1}(f;x)-f(x)| &\leq& |a(x,n)|\sum_{k=0}^{n-1}p_{n-1,k,\alpha}(x)\int_0^1 \mu_{n,k}^\rho(t)|f(t)-f(x)|dt\\
&&+ |a(1-x,n)|\sum_{k=0}^{n-1}p_{n-1,k,\alpha}(x) \int_0^1 \mu_{n,k+1}^\rho(t) |f(t)-f(x)|dt\\
&\leq& |a(x,n)|\sum_{k=0}^{n-1} p_{n-1,k,\alpha}(x)\int_0^1 \mu_{n,k}^\rho(t)\omega(f;|t-x|)dt\\
&&+ |a(1-x,n)|\sum_{k=0}^{n-1}p_{n-1,k,\alpha}(x) \int_0^1 \mu_{n,k+1}^\rho(t) \omega(f;|t-x|)dt\\
&\leq&  |a(x,n)| \omega\left(f;\frac{1}{\sqrt n}\right)\left[1+\sqrt n\sum_{k=0}^{n-1}p_{n-1,k,\alpha}(x)\int_0^1 \mu_{n,k}^\rho(t)|t-x|dt\right]\\
&&+ |a(1-x,n)|\omega\left(f;\frac{1}{\sqrt n}\right)\left[1+\sqrt n\sum_{k=0}^{n-1}p_{n-1,k,\alpha}(x)\int_0^1 \mu_{n,k+1}^\rho(t)|t-x|dt\right].
\end{eqnarray*}
Now, by using Holder's inequality, we get
\begin{eqnarray*}
\sum_{k=0}^{n-1}p_{n-1,k,\alpha}(x)\int_0^1 \mu_{n,k}^\rho(t) |t-x|dt &\leq& \sum_{k=0}^{n-1}p_{n-1,k,\alpha}(x)\left[\int_0^1 \mu_{n,k}^\rho(t)dt\right]^{\frac{1}{2}}\left[\int_0^1 \mu_{n,k}^\rho(t)(t-x)^2\right]^{\frac{1}{2}}\\
&=& \sum_{k=0}^{n-1}p_{n-1,k,\alpha}(x) \left[\int_0^1 \mu_{n,k}^\rho(t)(t-x)^2 dt\right]^{\frac{1}{2}}\\
&\leq & \left[\sum_{k=0}^{n-1}p_{n-1,k,\alpha}(x)\right]^{\frac{1}{2}}\left[\sum_{k=0}^{n-1}p_{n-1,k,\alpha}(x)\int_0^1 \mu_{n,k}^\rho (t)(t-x)^2dt\right]^{\frac{1}{2}}\\
&=& \sqrt{\frac{n(-{\rho}^2x^2-\rho x^2+{\rho}^2+3\rho x)+(6x^2+6\rho x^2+{\rho}^2x-3\rho x+2)}{(n\rho+2)(n\rho+3)}}.
\end{eqnarray*}
Therefore,
\begin{eqnarray}\label{a5}
\sqrt{n}\left[\sum_{k=0}^{n-1} p_{n-1,k,\alpha}(x)\int_0^1\mu_{n,k}^\rho(t)|t-x| dt\right]^{\frac{1}{2}}\leq \frac{C_1\sqrt{n}}{\sqrt{n\rho+2}} \leq C_1.
\end{eqnarray}
Similarly,
\begin{eqnarray}\label{a6}
\sqrt{n}\left[\sum_{k=0}^{n-1} p_{n-1,k,\alpha}(x)\int_0^1\mu_{n,k+1}^\rho(t)|t-x| dt\right]^{\frac{1}{2}}\leq \frac{C_2\sqrt{n}}{\sqrt{n\rho+2}} \leq C_2.
\end{eqnarray}
Using the inequalities (\ref{a5}) and (\ref{a6}), we get the following relation:
\begin{eqnarray*}
|\textit{J}_{n,\alpha,\rho}^{M,1}(f;x)-f(x)|\leq \omega\left(f;\frac{1}{\sqrt{n}}\right)\left[|a(x,n)|(1+C_1)+|a(1-x,n)|(1+C_2)\right].
\end{eqnarray*}
From equation (\ref{eq1}), we have  $$|a(x,n)|=|a_1(n)x+a_0(n)|\leq |a_1(n)|+|a_0(n)|=|1-2a_0(n)|+|a_0(n)|\leq 1+3|a_0(n)|$$
and $$|a(1-x,n)|\leq 1+3|a_0(n)|.$$
Therefore, we get
\begin{eqnarray}\label{a7}
|\textit{J}_{n,\alpha,\rho}^{M,1}(f;x)-f(x)|\leq \omega\left(f;\frac{1}{\sqrt{n}}\right)C_3 (1+3|a_0(n)|).
\end{eqnarray}
 Thus, proof is completed.
\end{proof}

\begin{corollary}
(i) If we assume $f\in C[0,1]$ in Theorem \ref{t5}, then $\displaystyle \lim_{n\rightarrow\infty}\omega\left(f;\frac{1}{\sqrt{n}}\right)=0,$ which gives another proof of the Theorems \ref{t1} and \ref{t3}.\\
(ii) If $f$ satisfies the Lipschitz condition of order $\tau$ with constant $L$ on $[0,1],$ then result obtained in Theorem \ref{t5} reduces to
\begin{eqnarray*}
|\textit{J}_{n,\alpha,\rho}^{M,1}(f;x)-f(x)|\leq L. C_3\, (1+3|a_0(n)|)\, n^{-\tau/2},
\end{eqnarray*}
$C_3$ is same as defined in (\ref{a7}).
\end{corollary}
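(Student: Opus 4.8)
The plan is to derive both consequences directly from the estimate already established in Theorem \ref{t5}, namely
$$\|\textit{J}_{n,\alpha,\rho}^{M,1}f-f\|\leq (1+3|a_0(n)|)\,C_3\,\omega\left(f;\frac{1}{\sqrt n}\right),$$
by feeding in the appropriate information about the modulus of continuity $\omega(f;\delta)$ in each of the two cases. No new analytic work is needed beyond Theorem \ref{t5}; the corollary is a matter of specializing that bound.

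For part (i), I would invoke the classical behaviour of the modulus of continuity: when $f\in C[0,1]$, $f$ is uniformly continuous on the compact interval $[0,1]$, and hence $\omega(f;\delta)\to 0$ as $\delta\to 0^{+}$. Since $\frac{1}{\sqrt n}\to 0$ as $n\to\infty$, this yields $\omega\!\left(f;\frac{1}{\sqrt n}\right)\to 0$. Under the hypotheses of Theorem \ref{t5} the sequence $a_0(n)$ is bounded, so the prefactor $(1+3|a_0(n)|)\,C_3$ remains bounded; consequently the right-hand side of the displayed inequality tends to $0$. This forces $\|\textit{J}_{n,\alpha,\rho}^{M,1}f-f\|\to 0$, i.e. uniform convergence on $[0,1]$, which is exactly the conclusion of Theorems \ref{t1} and \ref{t3}. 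Thus the corollary supplies an alternative route to those convergence statements.

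For part (ii), I would use the characterization of Lipschitz continuity recorded in the second Definition above, namely that $f$ satisfies the Lipschitz condition of order $\tau$ with constant $L$ if and only if $\omega(f;\delta)\leq L\,\delta^{\tau}$. Choosing $\delta=\frac{1}{\sqrt n}$ gives $\omega\!\left(f;\frac{1}{\sqrt n}\right)\leq L\,n^{-\tau/2}$, and substituting this into the estimate (\ref{a7}) of Theorem \ref{t5} produces
$$|\textit{J}_{n,\alpha,\rho}^{M,1}(f;x)-f(x)|\leq L\,C_3\,(1+3|a_0(n)|)\,n^{-\tau/2},$$
with the same constant $C_3$, which is precisely the asserted bound.

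Since both parts are immediate specializations of Theorem \ref{t5}, I do not expect any genuine technical obstacle. The only point requiring a moment's care is to note explicitly that the boundedness of $a_0(n)$ (already assumed in Theorem \ref{t5}) keeps the prefactor from blowing up, so that the vanishing of $\omega\!\left(f;\frac{1}{\sqrt n}\right)$ in part (i) actually transfers to the operator error. All the substantive estimation—handling the central moments and assembling the modulus-of-continuity bound—was completed in the proof of Theorem \ref{t5}, so the corollary reduces to reading off two direct consequences.
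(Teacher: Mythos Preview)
Your proposal is correct and matches the paper's approach: the corollary is stated without proof precisely because both parts are immediate specializations of the bound in Theorem~\ref{t5}, and you have supplied exactly those specializations (vanishing of $\omega(f;1/\sqrt{n})$ for continuous $f$ together with boundedness of $a_0(n)$ for part~(i), and the Lipschitz characterization $\omega(f;\delta)\le L\delta^{\tau}$ for part~(ii)).
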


Now, we find the errors in terms of modulus of continuity in asymptotic formula for our operators (\ref{a1}) for which $\textit{J}_{n,\alpha,\rho}^{M,1}(e_i;x)=e_i, i=0,1.$ Thus, we get the following conditions:
$$2a_0(n)+a_1(n)=1, a_0(n)(\rho+2)+a_1(n)(\rho+1)=0.$$
By solving these equations, we have $a_0(n)=\dfrac{\rho+1}{\rho}$ and $a_1(n)=-\dfrac{\rho+2}{\rho}.$

\begin{thm}\label{t6}
Let $f\in C^2[0,1], x\in [0,1]$ is fixed. Then
$$|\textit{J}_{n,\alpha,\rho}^{M,1}(f;x)-f(x)-\frac{1}{2} \textit{J}_{n,\alpha,\rho}^{M,1}((t-x)^2;x)f^{\prime\prime}(x)| \leq C\frac{1}{n}\omega \left(f^{\prime\prime},\frac{1}{n}\right)$$
where $C$ is a positive constant independent of $n, x.$
\end{thm}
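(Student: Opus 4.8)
\emph{Proof proposal.} The plan is to run the standard quantitative--Voronovskaja argument, exploiting the fact that the coefficients here have been fixed precisely so that $\textit{J}_{n,\alpha,\rho}^{M,1}(e_0;x)=1$ and $\textit{J}_{n,\alpha,\rho}^{M,1}(e_1;x)=x$, hence $\textit{J}_{n,\alpha,\rho}^{M,1}(t-x;x)=0$. First I would write Taylor's formula with the integral (Peano) remainder,
$$f(t)=f(x)+(t-x)f'(x)+\tfrac12(t-x)^2f''(x)+R(t,x),\qquad R(t,x)=\int_x^t(t-u)\bigl(f''(u)-f''(x)\bigr)\,du,$$
and apply $\textit{J}_{n,\alpha,\rho}^{M,1}(\cdot;x)$ to it. By the two identities above the first two Taylor terms are reproduced exactly, so
$$\textit{J}_{n,\alpha,\rho}^{M,1}(f;x)-f(x)-\tfrac12\textit{J}_{n,\alpha,\rho}^{M,1}((t-x)^2;x)f''(x)=\textit{J}_{n,\alpha,\rho}^{M,1}(R(\cdot,x);x),$$
and the whole problem reduces to estimating the right--hand side.

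The crucial point is that for the present choice $a_0(n)=\tfrac{\rho+1}{\rho}$, $a_1(n)=-\tfrac{\rho+2}{\rho}$ we are in Case 2 (indeed $a_0(n)+a_1(n)=-\tfrac1\rho<0$), so $\textit{J}_{n,\alpha,\rho}^{M,1}$ is \emph{not} positive and the absolute value cannot be pushed inside. I would circumvent this exactly as in the proof of Theorem \ref{t4}: from the representation (\ref{a9}), together with the fact that here $a(\cdot,n)$ is a bounded multiplier, $|a(x,n)|\le|a_1(n)|+|a_0(n)|=\tfrac{2\rho+3}{\rho}=:C$ (a constant independent of $n$, and $|a(1-x,n)|$ obeys the same bound), one gets
$$\bigl|\textit{J}_{n,\alpha,\rho}^{M,1}(R(\cdot,x);x)\bigr|\le C\sum_{k=0}^{n-1}p_{n-1,k,\alpha}(x)\left(\int_0^1\mu_{n,k}^\rho(t)|R(t,x)|\,dt+\int_0^1\mu_{n,k+1}^\rho(t)|R(t,x)|\,dt\right).$$
(Equivalently one may use the splitting $\textit{J}_{n,\alpha,\rho}^{M,1}=\textit{K}_{n,\alpha,\rho}^{M,1}-\textit{L}_{n,\alpha,\rho}^{M,1}$ from (\ref{m1}) and the positivity of $\textit{K}_{n,\alpha,\rho}^{M,1}$ and $\textit{L}_{n,\alpha,\rho}^{M,1}$.) It matters that this detour is taken at the very first step, since everything afterwards relies on positivity of the measures $\mu_{n,k}^\rho$.

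Next I would bound the remainder by the modulus of continuity: since $u$ lies between $x$ and $t$, $|R(t,x)|\le\tfrac12(t-x)^2\,\omega(f'';|t-x|)$, and then, by (\ref{a8}) with a free parameter $\delta>0$, $\omega(f'';|t-x|)\le(1+\delta^{-1}|t-x|)\,\omega(f'';\delta)$. Substituting, the bound splits into two pieces: a term controlled by $\sum_k p_{n-1,k,\alpha}(x)\int_0^1\mu_{n,k}^\rho(t)(t-x)^2\,dt$ (and its $k+1$ analogue), which is $O(1/n)$ — this is the building--block sum already estimated in the proofs of Theorems \ref{t5} and \ref{t4} and which governs the leading term of $\textit{J}_{n,\alpha,\rho}^{M,1}((t-x)^2;x)$ in Lemma \ref{l2} — and a term controlled by $\delta^{-1}\sum_k p_{n-1,k,\alpha}(x)\int_0^1\mu_{n,k}^\rho(t)|t-x|^3\,dt$. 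For the latter I would apply the Cauchy--Schwarz inequality twice, once inside each integral and once across the sum (as in the proof of Theorem \ref{t5}), bounding it by $\delta^{-1}\bigl(\sum_k p_{n-1,k,\alpha}(x)\int_0^1\mu_{n,k}^\rho(t)(t-x)^2dt\bigr)^{1/2}\bigl(\sum_k p_{n-1,k,\alpha}(x)\int_0^1\mu_{n,k}^\rho(t)(t-x)^4dt\bigr)^{1/2}=\delta^{-1}\,O(n^{-1/2})\,O(n^{-1})$, where the $O(n^{-2})$ bound for the fourth--moment sum is the one appearing in Lemma \ref{l2} and in the proof of Theorem \ref{t4}. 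Collecting,
$$\bigl|\textit{J}_{n,\alpha,\rho}^{M,1}(R(\cdot,x);x)\bigr|\le C\,\omega(f'';\delta)\bigl(O(1/n)+\delta^{-1}O(n^{-3/2})\bigr),$$
and choosing $\delta$ of order $n^{-1/2}$ balances the two contributions and produces a bound of the asserted form $C\,n^{-1}\,\omega(f'';\delta)$, with $C$ depending only on $\rho$ and $\alpha$.

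I expect the only genuine obstacle to be the loss of positivity in this regime, which has to be dealt with up front via (\ref{a9}) (or the $\textit{K}-\textit{L}$ decomposition); the remainder is the standard calculation. The leftover work — verifying that the second-- and fourth--moment building--block sums $\sum_k p_{n-1,k,\alpha}(x)\int_0^1\mu_{n,k}^\rho(t)(t-x)^{2m}\,dt$, $m=1,2$, are indeed $O(n^{-m})$ uniformly in $x$ and uniformly whether the basis index is $k$ or $k+1$ — is routine and, in fact, already carried out inside the proofs of Theorems \ref{t5} and \ref{t4}, so it can be quoted rather than repeated.
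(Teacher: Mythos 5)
Your proposal is correct and follows essentially the same route as the paper's own proof: Taylor's formula with the exactness of $e_0,e_1$ (so the first moment vanishes), the representation (\ref{a9}) with the bound $|a(x,n)|,|a(1-x,n)|\leq C(\rho)$ to handle the lack of positivity, the splitting $\omega(f'';|t-x|)\leq(1+\delta^{-1}|t-x|)\,\omega(f'';\delta)$, the second-moment sum of order $1/n$, and Cauchy--Schwarz for the $|t-x|^3$ term giving $O(n^{-3/2})$, with $\delta\sim n^{-1/2}$. Note that, exactly as in the paper's own proof, this yields the bound $Cn^{-1}\omega\left(f'';n^{-1/2}\right)$ rather than the $\omega\left(f'';\frac1n\right)$ written in the theorem statement, so your conclusion matches what the paper actually proves.
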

\begin{proof} Let $f\in C^2[0,1].$ Applying the operators $\textit{J}_{n,\alpha,\rho}^{M,1}(.;x)$ on Taylor's formula, we get:\\
$$|\textit{J}_{n,\alpha,\rho}^{M,1}(f;x)-f(x)-\frac{1}{2}\textit{J}_{n,\alpha,\rho}^{M,1}((t-x)^2;x)f^{\prime\prime}(x)| =\frac{1}{2}|\textit{J}_{n,\alpha,\rho}^{M,1}(\theta(t,x)(t-x)^2;x)|,$$
where $\theta(t,x)=f^{\prime\prime}(\xi_x)-f^{\prime\prime}(x)$ and $\xi_x$ lies between  $t$ and $x.$ By using the relation (\ref{a8}), by taking ($\lambda=\sqrt n\, |t-x|, \delta=\frac{1}{\sqrt n}$), we get
$$|\theta(t,x)|=|f^{\prime\prime}(\xi_x)-f^{\prime\prime}(x)| \leq \omega(f^{\prime\prime};|t-x|) \leq (1+\sqrt n\,|t-x|)\, \omega\left(f^{\prime\prime},\frac{1}{\sqrt n}\right).$$
Also, $|a(x,n)|=|a_1(n)x+a_0(n)|\leq \dfrac{\rho+1}{\rho},$ and $|a(1-x,n)|\leq \dfrac{\rho+1}{\rho},$ for $x\in [0,1]$ and using the definition (\ref{a9}), we get
\begin{eqnarray}\label{eq4}
|\textit{J}_{n,\alpha,\rho}^{M,1}(\theta(t,x)(t-x)^2;x)|\leq \frac{\rho+1}{\rho}\omega\left(f^{\prime\prime},\frac{1}{\sqrt n}\right)(J_1+\sqrt n J_2),
\end{eqnarray}
where
\begin{eqnarray*}
J_1 &=& \sum_{k=0}^{n-1} p_{n-1,k,\alpha}(x)\left[ \int_0^1 \mu_{n,k}^\rho(t)(t-x)^2 dt+\int_0^1 \mu_{n,k+1}^\rho(t)(t-x)^2 dt\right]\\
&=& \frac{1}{(n\rho+2)(n\rho+3)}\left[n(-2{\rho}^2x^2-2\rho x^2+2{\rho}^2x+4\rho x)\right.\\
&& +\left.(12x^2+12\rho x^2-12\rho x+{\rho}^2+3\rho+4-4\alpha {\rho}^2x(1-x))\right] \leq  \frac{(\rho+2)^2(n+2)}{(n\rho+2)(n\rho+3)}.\\
J_2 &=&\sum_{k=0}^{n-1}p_{n-1,k,\alpha}(x)\left[\int_0^1 \mu_{n,k}^\rho(t)|t-x|(t-x)^2 dt+\int_0^1\mu_{n,k+1}^\rho(t)|t-x|(t-x)^2 dt\right]\\
&\leq & \sum_{k=0}^{n-1}p_{n-1,k,\alpha}(x)\left[\int_0^1 \mu_{n,k}^\rho(t)(t-x)^2 dt\right]^{\frac{1}{2}}\left[\int_0^1 \mu_{n,k}^\rho(t)(t-x)^4 dt\right]^{\frac{1}{2}}\\
&&+\sum_{k=0}^{n-1}p_{n-1,k,\alpha}(x)\left[\int_0^1 \mu_{n,k+1}^\rho(t)(t-x)^2 dt\right]^{\frac{1}{2}}\left[\int_0^1 \mu_{n,k+1}^\rho(t)(t-x)^4 dt\right]^{\frac{1}{2}}\\
& \leq & \left[\sum_{k=0}^{n-1} p_{n-1,k,\alpha}(x)\int_0^1 \mu_{n,k}^\rho(t)(t-x)^2 dt\right]^\frac{1}{2}\times\left[\sum_{k=0}^{n-1} p_{n-1,k,\alpha}(x)\int_0^1 \mu_{n,k}^\rho(t)(t-x)^4 dt\right]^\frac{1}{2}\\
&& + \left[\sum_{k=0}^{n-1} p_{n-1,k,\alpha}(x)\int_0^1 \mu_{n,k+1}^\rho(t)(t-x)^2 dt\right]^\frac{1}{2}\times\left[\sum_{k=0}^{n-1} p_{n-1,k,\alpha}(x)\int_0^1 \mu_{n,k+1}^\rho(t)(t-x)^4 dt\right]^\frac{1}{2}.\\
\end{eqnarray*}
Using the results of lemma \ref{l2}, we have:
\begin{eqnarray*}
J_2 & \leq & \frac{P}{n\sqrt{n}}, \,\, \mbox{where}\,\, P \,\, \mbox{is independent of}\,\, n.
\end{eqnarray*}
Now replacing the values of $J_1$ and $J_2$ in the relation (\ref{eq4}), we can find a positive constant $C$ which is independent of $n$ and $x$ such that
\begin{eqnarray}
|\textit{J}_{n,\alpha,\rho}^{M,1}(\theta(t,x)(t-x)^2;x)| \leq C\frac{1}{n} \omega \left(f^{\prime\prime},\frac{1}{\sqrt n}\right).
\end{eqnarray}
Hence, we get the required result.
\end{proof}

Now, we find error estimation of the operators $\textit{J}_{n,\alpha,\rho}^{M,1}(f;x)$ in terms of second order modulus of continuity which gives better results than the results found in Theorem \ref{t5} in terms of second order modulus of continuity.
\begin{thm}
If $f\in C[0,1],$ $a_0(n)=\dfrac{\rho+1}{\rho},a_1(n)=-\dfrac{\rho+2}{\rho},$ then
$$||\textit{J}_{n,\alpha,\rho}^{M,1}(f,.)-f|| \leq C\omega_2\left(f;\frac{1}{n}\right).$$
\end{thm}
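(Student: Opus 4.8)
The plan is to prove this by the classical Peetre $K$-functional argument, with one point of care forced by the non-positivity of the operator. First note that the prescribed coefficients $a_0(n)=\frac{\rho+1}{\rho}$, $a_1(n)=-\frac{\rho+2}{\rho}$ were chosen precisely so that $\textit{J}_{n,\alpha,\rho}^{M,1}(e_0;x)=1$ and $\textit{J}_{n,\alpha,\rho}^{M,1}(e_1;x)=x$, hence $\textit{J}_{n,\alpha,\rho}^{M,1}(t-x;x)=0$; moreover $a_0(n)+a_1(n)=-\frac{1}{\rho}<0$, so these operators fall under Case 2 and are \emph{not} positive, and hence the usual monotonicity-based estimates are unavailable: every bound must instead be routed through the kernel representation $(\ref{a9})$. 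As a first consequence of $(\ref{a9})$, using $\sum_{k=0}^{n-1}p_{n-1,k,\alpha}(x)=1$, $p_{n-1,k,\alpha}(x)\ge 0$, $\int_0^1\mu_{n,k}^\rho(t)\,dt=1$ and the bounds $|a(x,n)|\le\frac{\rho+1}{\rho}$, $|a(1-x,n)|\le\frac{\rho+1}{\rho}$ on $[0,1]$, I would record the uniform estimate $\|\textit{J}_{n,\alpha,\rho}^{M,1}f\|\le\frac{2(\rho+1)}{\rho}\|f\|=:M_\rho\|f\|$ valid for all $n$.

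Next I would treat $g\in C^2[0,1]$. By Taylor's formula together with $\textit{J}_{n,\alpha,\rho}^{M,1}(e_0;x)=1$ and $\textit{J}_{n,\alpha,\rho}^{M,1}(t-x;x)=0$ one has $\textit{J}_{n,\alpha,\rho}^{M,1}(g;x)-g(x)=\textit{J}_{n,\alpha,\rho}^{M,1}\big(g(t)-g(x)-(t-x)g'(x);x\big)$; bounding $|g(t)-g(x)-(t-x)g'(x)|\le\frac{1}{2}\|g''\|(t-x)^2$ and again passing through $(\ref{a9})$ (which absorbs the non-positivity into the factor $\frac{\rho+1}{\rho}$) gives $|\textit{J}_{n,\alpha,\rho}^{M,1}(g;x)-g(x)|\le\frac{\rho+1}{2\rho}\|g''\|\,J_1$, where $J_1$ is exactly the quantity already estimated in the proof of Theorem \ref{t6}, namely $J_1\le\frac{(\rho+2)^2(n+2)}{(n\rho+2)(n\rho+3)}\le\frac{C_\rho}{n}$ (equivalently, $J_1$ is the second central moment of Lemma \ref{l2} evaluated at the present $a_i(n)$, whose leading term is of order $n^{-1}$). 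Hence $\|\textit{J}_{n,\alpha,\rho}^{M,1}g-g\|\le\widetilde{C}_\rho\,\frac{1}{n}\,\|g''\|$, with $\widetilde{C}_\rho$ independent of $n$ and $x$.

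Finally, for $f\in C[0,1]$ and any $g\in C^2[0,1]$ I would split
\[ \|\textit{J}_{n,\alpha,\rho}^{M,1}f-f\|\le\|\textit{J}_{n,\alpha,\rho}^{M,1}(f-g)\|+\|\textit{J}_{n,\alpha,\rho}^{M,1}g-g\|+\|g-f\|\le(M_\rho+1)\|f-g\|+\widetilde{C}_\rho\,\tfrac{1}{n}\,\|g''\|, \]
take the infimum over $g\in C^2[0,1]$ to obtain $\|\textit{J}_{n,\alpha,\rho}^{M,1}f-f\|\le C\,K_2\big(f;\tfrac{1}{n}\big)$, and then pass to the second order modulus of continuity via the classical equivalence of $K_2$ with $\omega_2$ — concretely, by inserting the second-order Steklov mean $g=f_\delta$, which satisfies $\|f-f_\delta\|\le c\,\omega_2(f;\delta)$ and $\|f_\delta''\|\le c\,\delta^{-2}\omega_2(f;\delta)$, and choosing $\delta$ so that $n^{-1}\delta^{-2}$ stays bounded — to arrive at the asserted bound. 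I expect the only genuine obstacle to be this systematic substitute for positivity, i.e.\ making sure $(\ref{a9})$ legitimizes the constants $M_\rho$ and $\frac{\rho+1}{2\rho}J_1$; once that is in place, the order-$n^{-1}$ control of the second moment furnished by Lemma \ref{l2} and Theorem \ref{t6} closes the argument immediately.
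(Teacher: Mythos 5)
Your proof is correct and essentially the same as the paper's: both run the Peetre $K$-functional argument, combining a uniform bound on $\|\textit{J}_{n,\alpha,\rho}^{M,1}f\|$ with an order-$n^{-1}$ estimate $\|\textit{J}_{n,\alpha,\rho}^{M,1}g-g\|\le C n^{-1}\|g''\|$ for $g\in C^{2}[0,1]$ (which you obtain directly from Taylor's remainder, the vanishing first moment and the bound on $J_{1}$ through (\ref{a9}), whereas the paper quotes Theorem \ref{t6} and uses $\omega(g'',\delta)\le 2\|g''\|$), and then passing from $K_{2}$ to $\omega_{2}$. The only caveat — shared with the paper's own proof — is that the argument controls $K_{2}\left(f;\tfrac{1}{n}\right)$ and hence, with your Steklov choice $\delta\sim n^{-1/2}$, yields $C\,\omega_{2}\left(f;\tfrac{1}{\sqrt{n}}\right)$ rather than the literally stated $\omega_{2}\left(f;\tfrac{1}{n}\right)$.
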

\begin{proof}
As we know
\begin{eqnarray*}
||\textit{J}_{n,\alpha,\rho}^{M,1}f|| \leq (|a_0(n)|+|a_1(n)|)||f||.
\end{eqnarray*}
Let $g\in C^2[0,1],$ then from Theorem \ref{t6}, we obtain
\begin{eqnarray*}
|\textit{J}_{n,\alpha,\rho}^{M,1}(g;x)-g(x)| \leq \frac{1}{2}\textit{J}_{n,\alpha,\rho}^{M,1}((t-x)^2;x)|g^{\prime\prime}(x)|+ \frac{C_1}{n}\omega\left(g^{\prime\prime},\frac{1}{\sqrt n}\right),
\end{eqnarray*}
where $C_1$ is a positive constant independent of $n$ and $x.$\\
Using the property of modulus of continuity i.e. $\omega(h,\delta)\leq 2\|h\|,$ we have
\begin{eqnarray*}
||\textit{J}_{n,\alpha,\rho}^{M,1}(g,.)-g||\leq  \frac{C_2}{n}||g^{\prime\prime}||,
\end{eqnarray*}
where $C_2$ is a positive constant independent of $n$ and $x.$
Thus, for $f\in C[0,1],$ we have
\begin{eqnarray}\label{a10}
||\textit{J}_{n,\alpha,\rho}^{M,1}(f,.)-f|| &\leq& ||\textit{J}_{n,\alpha,\rho}^{M,1}(f-g)-(f-g)||+||\textit{J}_{n,\alpha,\rho}^{M,1}(g)-g||\nonumber\\
&\leq& C_3||f-g||+\frac{C_2}{n}||g^{\prime\prime}|| \leq C\left[||f-g||+\frac{1}{n}||g^{\prime\prime}||\right],
\end{eqnarray}
where $C$ and $C_3$ are some positive constants independent of $n$ and $x.$\\
Now, keeping in mind the equivalence of second order modulus of continuity $\omega_2(f,t)$ and $K-$functional
$K_2(f,t^2):=\displaystyle\inf_{g\in C^2[0,1]}\{||f-g||+t^2||g^{\prime\prime}||\}$
i.e. $K_2(f,t^2) \leq \dfrac{7}{2}\omega_2 (f,t), 0\leq t \leq1, f\in C[0,1],$ (see\,[\cite{GON}, Corollary 2.7]),
and then by taking the infimum over all $g\in C^2[0,1],$ to (\ref{a10}) we get the required result.
\end{proof}

\section{$\alpha-$Bernstein P\u{a}lt\u{a}nea operators of second order}
In a similar way, we can define the second order modifications of the operators $\textit{Q}_{n,\rho}^{\alpha}(f,x)$ which is given by:
\begin{eqnarray}\label{a2}
\textit{J}_{n,\alpha,\rho}^{M,2}(f,x)=\sum_{k=0}^n p_{n,k,\alpha}^{M,2}(x)\int_0^1\mu_{n,k}^\rho(t)f(t)dt,
\end{eqnarray}

\begin{eqnarray*}
\mbox{where}\,\,\qquad p_{n,k,\alpha}^{M,2}(x) &=& a(x,n)p_{n-2,k,\alpha}(x)+b(x,n)p_{n-2,k-1,\alpha}(x)+a(1-x,n)p_{n-2,k-2,\alpha}(x),\\
\mu_{n,k}^\rho(t) &=& \frac{t^{k\rho}(1-t)^{(n-k)\rho}}{B(k\rho+1,(n-k)\rho+1)},
\end{eqnarray*}
and $a(x,n)=a_2(n)x^2+a_1(n)x+a_0(n),$ $b(x,n)=b_0(n)x(1-x).$\\
If $a_2(n)=1, a_1(n)=-2, a_0(n)=1, b_0(n)=2,$ then we will get our original operators (\ref{OP}).

\begin{lemma}\label{l3}
The moments of the operators (\ref{a2}) are given by:
\begin{eqnarray*}
\textit{J}_{n,\alpha,\rho}^{M,2}(e_0;x) &=& x^2(2a_2(n)-b_0(n))+x(b_0(n)-2a_2(n))+(2a_0(n)+a_1(n)+a_2(n)),\\
\textit{J}_{n,\alpha,\rho}^{M,2}(e_1;x) &=& \frac{1}{(n\rho+2)}\left[n\{x^3(2a_2(n)-b_0(n))\rho+x^2(-2a_2(n)+b_0(n))\rho+x(2a_0(n)+a_1(n)+a_2(n))\rho\}\right.\\
&& + \left.\left\{x^3(-4a_2(n)+2b_0(n))\rho+x^2(2a_2(n)(3\rho+1)-b_0(n)(3\rho+1))\right.\right.\\
&& + x(-2a_2(n)(3\rho+1)-4\rho a_1(n)-4\rho a_0(n)+b_0(n)(\rho+1))\\
&& +\left.\left.(a_2(n)(1+2\rho)+a_1(n)(1+2\rho)+2a_0(n)(1+\rho))\right\}\right],\\
\textit{J}_{n,\alpha,\rho}^{M,2}(e_2;x) &=& \frac{1}{(n \rho+2)(n \rho+3)}\left[n^2 {\rho}^2 \left\{x^4(2a_2(n)-b_0(n))+x^3 (-2a_2(n)+b_0(n))\right.\right.\\
&& + \left.x^2 (2a_0(n)+a_1(n)+a_2(n)) \right\}+n\left\{-5x^4{\rho}^2(2a_2(n)-b_0(n))\right.\\
&& + x^3\rho((16{\rho}+6)a_2(n)-(7{\rho}+4)b_0(n))\\
&& + \left.x^2\rho(-3(5{\rho}+2) a_2(n)-9{\rho} a_1(n)-10{\rho}a_0(n)+3({\rho}+1)b_0(n))\right.\\
&& +\left.x\rho(6(\rho+1)a_0(n)+(3+5\rho)(a_1(n)+a_2(n)))\right\}+ \left\{2x^4{\rho}^2(2+\alpha)(2a_2(n)-b_0(n))\right.\\
&& -2x^3\rho(2\alpha \rho+4\rho+3)(2a_2(n)-b_0(n))\\
&& + x^2\left({\rho}^2(6(4+\alpha)a_2(n)+2(6+\alpha)a_1(n)+4(2+\alpha)a_0(n)-(5+2\alpha)b_0(n))\right.\\
&& +\left.(9\rho+2)(2a_2(n)-b_0(n))\right)+x\left(-2(2a_2(n)-b_0(n))-3\rho(6a_2(n)+4(a_1(n)+a_0(n))-b_0(n))\right.\\
&& -\left.{\rho}^2(2(8+\alpha)a_2(n)+2(6+\alpha)a_1(n)+4(2+\alpha)a_0(n)-b_0(n))\right)\\
&& +\left.\left.4{\rho}^2(a_2(n)+a_1(n)+a_0(n))+6\rho(a_2(n)+a_1(n)+a_0(n))+2(a_2(n)+a_1(n)+2a_0(n))\right\}\right].
\end{eqnarray*}
To study the uniform convergence of these operators, we take $\textit{J}_{n,\alpha,\rho}^{M,2}(e_0,x)=1,$ which give the following conditions:
$$2a_2(n)-b_0(n)=0,\quad 2a_0(n)+a_1(n)+a_2(n)=1.$$
With both of these conditions, other moments reduce to:
\begin{eqnarray*}
\textit{J}_{n,\alpha,\rho}^{M,2}(e_1,x) &=& x+\frac{1}{n\rho+2}[(1+2\rho-2\rho a_0(n))-2x(1+2\rho-2a_0(n)\rho)],\\
\textit{J}_{n,\alpha,\rho}^{M,2}(e_2,x) &=& x^2-\frac{1}{(n\rho+2)(n\rho+3)}\left[(2a_2(n){\rho}^2+2\alpha{\rho}^2-n{\rho}^2-4{\rho}^2-n\rho-8\rho-6)x(1-x)+(1+2\rho)\right].
\end{eqnarray*}
\end{lemma}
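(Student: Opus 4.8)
The plan is to prove Lemma \ref{l3} by a direct computation from the definition (\ref{a2}); the two ingredients are the moments of the P\u{a}lt\u{a}nea kernel and the moments of the $\alpha$-Bernstein basis. For the kernel, $\mu_{n,k}^\rho(t)=t^{k\rho}(1-t)^{(n-k)\rho}/B(k\rho+1,(n-k)\rho+1)$ together with the identity $B(a+1,b)/B(a,b)=a/(a+b)$ gives $\int_0^1\mu_{n,k}^\rho(t)\,dt=1$, $\int_0^1\mu_{n,k}^\rho(t)t\,dt=(k\rho+1)/(n\rho+2)$ and $\int_0^1\mu_{n,k}^\rho(t)t^2\,dt=(k\rho+1)(k\rho+2)/[(n\rho+2)(n\rho+3)]$; so for $i=0,1,2$ the quantity $\int_0^1\mu_{n,k}^\rho(t)t^i\,dt$ is an explicit polynomial of degree $i$ in $k$. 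For the basis I would use the moments of $p_{m,k,\alpha}$ recorded in \cite{CHE,KA} (and already invoked in the proof of Theorem \ref{t3}): $\sum_k p_{m,k,\alpha}(x)=1$, $\sum_k k\,p_{m,k,\alpha}(x)=mx$, and $\sum_k k^2 p_{m,k,\alpha}(x)=m^2x^2+(m+2(1-\alpha))x(1-x)$. Since (\ref{a2}) only involves $t^i$ with $i\le 2$ and the kernel moments are polynomials of degree $\le 2$ in $k$, these three sums, taken with $m=n-2$, are all that will be needed.

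For the main step, write $p_{n,k,\alpha}^{M,2}(x)=a(x,n)p_{n-2,k,\alpha}(x)+b(x,n)p_{n-2,k-1,\alpha}(x)+a(1-x,n)p_{n-2,k-2,\alpha}(x)$ and, for each $i\in\{0,1,2\}$, split
$$\textit{J}_{n,\alpha,\rho}^{M,2}(e_i;x)=\sum_{k}p_{n,k,\alpha}^{M,2}(x)\int_0^1\mu_{n,k}^\rho(t)t^i\,dt$$
into three sums. In the second I substitute $j=k-1$ and in the third $j=k-2$, so each sum runs over $0\le j\le n-2$ against $p_{n-2,j,\alpha}(x)$, while the factor $\int_0^1\mu_{n,k}^\rho(t)t^i\,dt$ becomes the correspondingly shifted polynomial in $j$. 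Evaluating each sum with the three basis moments and then collecting powers of $x$ — the $x$-dependence entering through $a(x,n)=a_2(n)x^2+a_1(n)x+a_0(n)$ and $b(x,n)=b_0(n)x(1-x)$ — yields exactly the formulas stated in the lemma. Two running checks keep the bookkeeping honest: specializing $a_2=1,a_1=-2,a_0=1,b_0=2$ (so that $a(x,n)=(1-x)^2$, $b(x,n)=2x(1-x)$, $a(1-x,n)=x^2$) must collapse the formulas to the known moments of the operators (\ref{OP}) from \cite{KG}; and the symmetry $p_{m,k,\alpha}(x)=p_{m,m-k,\alpha}(1-x)$ together with $\mu_{n,k}^\rho(t)=\mu_{n,n-k}^\rho(1-t)$ links the first and third of the three sums under $x\mapsto 1-x$.

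It remains to derive the reduced forms: the $e_0$-identity just obtained is $(2a_2(n)-b_0(n))x^2+(b_0(n)-2a_2(n))x+(2a_0(n)+a_1(n)+a_2(n))$, i.e. $(2a_2(n)-b_0(n))(x^2-x)+(2a_0(n)+a_1(n)+a_2(n))$, which is identically $1$ exactly when $2a_2(n)-b_0(n)=0$ and $2a_0(n)+a_1(n)+a_2(n)=1$; substituting these two relations into the $e_1$- and $e_2$-expressions and simplifying produces the quoted reduced moments. The only real obstacle is the sheer size of the algebra — tracking the index shifts and correctly collecting the $x$-coefficients — rather than any conceptual difficulty; organising the computation as ``first evaluate $\sum_k p_{n,k,\alpha}^{M,2}(x)k^i$ for $i=0,1,2$, then assemble the $\textit{J}_{n,\alpha,\rho}^{M,2}(e_i;x)$ as linear combinations'' keeps it manageable, and a symbolic verification (in Maple, as the paper already uses) is the natural safeguard.
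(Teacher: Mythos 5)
Your proposal is correct and follows the route the paper itself (implicitly) takes: Lemma \ref{l3} is stated there without proof, and the identities are obtained exactly as you describe, by combining the Beta-kernel moments $\int_0^1\mu_{n,k}^\rho(t)t^i\,dt$, $i=0,1,2$ (which are degree-$i$ polynomials in $k$), with the zeroth, first and second moments of the basis $p_{n-2,k,\alpha}$ after the index shifts $k\mapsto k-1$ and $k\mapsto k-2$, and then collecting powers of $x$ coming from $a(x,n)$, $b(x,n)$, $a(1-x,n)$. Your reduction step is also sound: the $e_0$-identity is $(2a_2(n)-b_0(n))(x^2-x)+(2a_0(n)+a_1(n)+a_2(n))$, so it equals $1$ identically precisely under the two stated conditions, and substituting them (for instance, the $x$-coefficient in the $e_1$-formula collapses to $-4\rho(a_0(n)+a_1(n)+a_2(n))=-4\rho(1-a_0(n))$, and the constant to $1+2\rho-2\rho a_0(n)$) reproduces the quoted reduced moments.
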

In order to have $\displaystyle\lim_{n\rightarrow \infty}\textit{J}_{n,\alpha,\rho}^{M,2}(e_i;x)=x^i, i=0,1,2,$ we choose undetermined coefficients as:
$$a_0(n)=\frac{1+2\rho}{2\rho}, a_2(n)=\frac{n(\rho+1)}{2\rho},\, b_0(n)=\frac{n(\rho+1)}{\rho}, a_1(n)=\frac{-(n+2)(\rho+1)}{2\rho}.$$
Thus, our operators become:
\begin{eqnarray}{\label{a3}}
\overline{\textit{J}}_{n,\alpha,\rho}^{M,2}(f;x)=\sum_{k=0}^{n}\overline{p}_{n,k,\alpha}^{M,2}(x)\int_0^1\mu_{n,k}^\rho(t)f(t)dt,
\end{eqnarray}
where
\begin{eqnarray*}
\overline{p}_{n,k,\alpha}^{M,2}(x) &=& \left( \frac{n(\rho+1)}{2 \rho} x^2-\frac{(n+2)(\rho+1)}{2 \rho} x+ \frac{1+2\rho}{2 \rho}\right) p_{n-2,k,\alpha}(x)+ \frac{n(\rho+1)}{\rho}x(1-x) p_{n-2,k-1,\alpha}(x){\nonumber}\\
&& + \left(\frac{n(\rho+1)}{2\rho} x^2-\frac{(n-2)(\rho+1)}{2\rho} x-\frac{1}{2\rho}\right) p_{n-2,k-2,\alpha}(x).
\end{eqnarray*}
\begin{lemma}
The moments of the operators ({\ref{a3}}) are:
\begin{eqnarray*}
\overline{\textit{J}}_{n,\alpha,\rho}^{M,2}(e_0;x) &=& 1,\\
\overline{\textit{J}}_{n,\alpha,\rho}^{M,2}(e_1;x) &=& x,\\
\overline{\textit{J}}_{n,\alpha,\rho}^{M,2}(e_2;x) &=& x^2+\frac{1}{(n\rho+2)(n\rho+3)} \left[-1-2 \rho +(6+8 \rho+4{\rho}^2-2\alpha {\rho}^2)x(1-x)\right].
\end{eqnarray*}
\end{lemma}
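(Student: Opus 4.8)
The plan is to read off these three identities as specializations of the reduced moment formulas already recorded at the end of Lemma \ref{l3}. Indeed, $\overline{\textit{J}}_{n,\alpha,\rho}^{M,2}$ is nothing but $\textit{J}_{n,\alpha,\rho}^{M,2}$ with the coefficients fixed to $a_0(n)=\frac{1+2\rho}{2\rho}$, $a_1(n)=-\frac{(n+2)(\rho+1)}{2\rho}$, $a_2(n)=\frac{n(\rho+1)}{2\rho}$, $b_0(n)=\frac{n(\rho+1)}{\rho}$. So the first step is to check that this choice satisfies the two normalizing constraints $2a_2(n)-b_0(n)=0$ and $2a_0(n)+a_1(n)+a_2(n)=1$ under which Lemma \ref{l3} gives the simplified expressions for $\textit{J}_{n,\alpha,\rho}^{M,2}(e_1,x)$ and $\textit{J}_{n,\alpha,\rho}^{M,2}(e_2,x)$: a one-line computation in each case confirms both. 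Granting the first constraint, $\overline{\textit{J}}_{n,\alpha,\rho}^{M,2}(e_0;x)=1$ is immediate from the $e_0$ formula.

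For $e_1$, I would substitute $a_0(n)=\frac{1+2\rho}{2\rho}$ into the reduced formula $\textit{J}_{n,\alpha,\rho}^{M,2}(e_1,x)=x+\frac{1}{n\rho+2}\bigl[(1+2\rho-2\rho a_0(n))-2x(1+2\rho-2\rho a_0(n))\bigr]$ and note that $1+2\rho-2\rho a_0(n)=1+2\rho-(1+2\rho)=0$, so the bracket vanishes identically and $\overline{\textit{J}}_{n,\alpha,\rho}^{M,2}(e_1;x)=x$. For $e_2$, I would substitute $a_2(n)=\frac{n(\rho+1)}{2\rho}$ into the reduced formula for $\textit{J}_{n,\alpha,\rho}^{M,2}(e_2,x)$; since $2a_2(n)\rho^2=n\rho(\rho+1)=n\rho^2+n\rho$, the coefficient of $x(1-x)$ inside the bracket collapses from $2a_2(n)\rho^2+2\alpha\rho^2-n\rho^2-4\rho^2-n\rho-8\rho-6$ to $2\alpha\rho^2-4\rho^2-8\rho-6$, and after distributing the leading minus sign this is exactly $-1-2\rho+(6+8\rho+4\rho^2-2\alpha\rho^2)x(1-x)$ divided by $(n\rho+2)(n\rho+3)$, as claimed.

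In short this lemma is a routine specialization: the genuine combinatorial work — expressing the raw moments of $\textit{J}_{n,\alpha,\rho}^{M,2}$ through the moments of the $\alpha$-Bernstein basis $p_{n-2,k,\alpha}$ and the weight functions $\mu_{n,k}^\rho$ — has already been absorbed into Lemma \ref{l3}. Consequently there is no real obstacle; the only point demanding care is bookkeeping the several occurrences of $n$ and $\rho$ when simplifying the $e_2$ term, so that the $n\rho^2$ and $n\rho$ contributions from $2a_2(n)\rho^2$ cancel precisely against the $-n\rho^2$ and $-n\rho$ already present in the bracket, leaving an expression free of $n$ except through the denominator.
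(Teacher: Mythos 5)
Your proposal is correct and matches the paper's (implicit) derivation: the lemma follows by substituting the chosen coefficients into the reduced moment formulas recorded at the end of Lemma~\ref{l3}, exactly as you do, and your verifications of the two constraints and of the cancellations for $e_1$ and $e_2$ all check out. The only cosmetic point is that $\overline{\textit{J}}_{n,\alpha,\rho}^{M,2}(e_0;x)=1$ uses \emph{both} constraints (the first kills the $x^2$ and $x$ terms, the second gives the constant $1$), but since you verified both, this is immaterial.
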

\begin{lemma}\label{l4}
The central moments of the operators ({\ref{a3}}) are given by:
\begin{eqnarray*}
\overline{\textit{J}}_{n,\alpha,\rho}^{M,2}((t-x)^2;x) &=& \frac{1}{(n\rho+2)(n\rho+3)} \left[-1-2 \rho +(6+8 \rho+4{\rho}^2-2\alpha {\rho}^2)x(1-x)\right],\\
\overline{\textit{J}}_{n,\alpha,\rho}^{M,2}((t-x)^3;x) &=& \frac{2x(1-x)(2x-1)\rho(1+\rho)(2+\rho)n}{(n\rho+2)(n\rho+3)(n\rho+4)}+O\left(\frac{1}{n^3}\right),\\
\overline{\textit{J}}_{n,\alpha,\rho}^{M,2}((t-x)^4;x) &=& -\frac{3x^2(1-x)^2{\rho}^2(1+{\rho})^2n^2}{(n\rho+2)(n\rho+3)(n\rho+4)(n\rho+5)}
+O\left(\frac{1}{n^3}\right),\\
\overline{\textit{J}}_{n,\alpha,\rho}^{M,2}((t-x)^5;x) &=&\frac{30x^2(1-x)^2(2x-1){\rho}^2(1+{\rho})^2(2+\rho))n^2}{(n\rho+2)(n\rho+3)(n\rho+4)(n\rho+5)(n\rho+6)}
+O\left(\frac{1}{n^4}\right),\\
\overline{\textit{J}}_{n,\alpha,\rho}^{M,2}((t-x)^6;x) &=& -\frac{30x^3(1-x)^3{\rho}^3(1+\rho)^3n^3}{(n\rho+2)(n\rho+3)(n\rho+4)(n\rho+5)(n\rho+6)(n\rho+7)}
+O\left(\frac{1}{n^4}\right).
\end{eqnarray*}
\end{lemma}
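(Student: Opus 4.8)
The first identity of Lemma \ref{l4} is nothing new: it is exactly the value of $\overline{\textit{J}}_{n,\alpha,\rho}^{M,2}((t-x)^2;x)$ recorded in the preceding lemma, so only the cases $m=3,4,5,6$ require work. The plan is to reduce these to the ordinary moments. By linearity and the binomial theorem,
\[
\overline{\textit{J}}_{n,\alpha,\rho}^{M,2}\big((t-x)^{m};x\big)=\sum_{j=0}^{m}\binom{m}{j}(-x)^{\,m-j}\,\overline{\textit{J}}_{n,\alpha,\rho}^{M,2}(e_{j};x),\qquad m=3,4,5,6,
\]
so it suffices to have closed forms for $\overline{\textit{J}}_{n,\alpha,\rho}^{M,2}(e_{j};x)$, $j=0,\dots,6$. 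The values for $j=0,1,2$ are given in the lemma just before the statement; the remaining ones $j=3,4,5,6$ are obtained by the same device.

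To get those, first absorb the index shifts in (\ref{a3}): writing $k-1\mapsto k$ and $k-2\mapsto k$ in the second and third sums gives
\[
\overline{\textit{J}}_{n,\alpha,\rho}^{M,2}(e_{i};x)=\frac{1}{(n\rho+2)(n\rho+3)\cdots(n\rho+i+1)}\sum_{k}p_{n-2,k,\alpha}(x)\Big[a(x,n)\,\phi_{i}(k)+b_{0}(n)x(1-x)\,\phi_{i}(k+1)+a(1-x,n)\,\phi_{i}(k+2)\Big],
\]
where $\phi_{i}(u)=\prod_{l=1}^{i}(u\rho+l)$, since the Beta-function values give at once $\int_{0}^{1}\mu_{n,k+s}^{\rho}(t)t^{i}\,dt=\phi_{i}(k+s)/\big((n\rho+2)\cdots(n\rho+i+1)\big)$ for $s=0,1,2$. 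Each $\phi_{i}(k+s)$ is a polynomial of degree $i$ in $k$, so the inner sum is a linear combination of the power sums $\sum_{k}p_{n-2,k,\alpha}(x)k^{r}=(n-2)^{r}\,\mathcal{T}_{n-2}^{\alpha}(e_{r};x)$ with $r\le i\le 6$, and the moments $\mathcal{T}_{m}^{\alpha}(e_{r};x)$ of the $\alpha$-Bernstein operators (\ref{m2}) are explicit polynomials in $x$ (as computed in \cite{CHE,KA}). Substituting the prescribed coefficients $a_{0}(n)=\frac{1+2\rho}{2\rho}$, $a_{1}(n)=-\frac{(n+2)(\rho+1)}{2\rho}$, $a_{2}(n)=\frac{n(\rho+1)}{2\rho}$, $b_{0}(n)=\frac{n(\rho+1)}{\rho}$ into $a(x,n)=a_{2}(n)x^{2}+a_{1}(n)x+a_{0}(n)$ and $b(x,n)=b_{0}(n)x(1-x)$, one collects the resulting rational function of $n$, discards the terms of order $O(n^{-3})$ (resp.\ $O(n^{-4})$ when $m=5,6$), and feeds the outcome into the binomial identity above to obtain the four stated formulas. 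As a consistency check one recovers along the way $\overline{\textit{J}}_{n,\alpha,\rho}^{M,2}(e_{0};x)=1$ and $\overline{\textit{J}}_{n,\alpha,\rho}^{M,2}(e_{1};x)=x$.

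The conceptual content is light; the real obstacle is the bookkeeping of cancellations. Because $a_{1}(n),a_{2}(n),b_{0}(n)$ are of order $n$, the individual moments $\overline{\textit{J}}_{n,\alpha,\rho}^{M,2}(e_{j};x)$ arise from brackets of size $O(n^{\,j+1})$ against denominators $(n\rho+2)\cdots(n\rho+j+1)$ of size $O(n^{\,j})$; it is exactly the constraints $2a_{2}(n)-b_{0}(n)=0$ and $2a_{0}(n)+a_{1}(n)+a_{2}(n)=1$ (equivalently, the exact reproduction of $e_{0}$ and $e_{1}$ and the $O(1/n)$ reproduction of $e_{2}$) that force the top two powers of $n$ in the combination $\sum_{j}\binom{m}{j}(-x)^{m-j}\overline{\textit{J}}^{M,2}(e_{j};x)$ to collapse. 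It is this collapse that produces the $n^{2}$ rather than $n^{3}$ numerator and the negative sign in $\overline{\textit{J}}_{n,\alpha,\rho}^{M,2}((t-x)^{4};x)$, the one-power drop in the odd central moments $m=3,5$, and the $n^{3}$ numerator for $m=6$. To keep the algebra tractable one works modulo the target error order throughout: in each $\phi_{i}(k+s)$ and each $\mathcal{T}_{n-2}^{\alpha}(e_{r};x)$ retain only the monomials in $n$ (and in the shift $s$) that can survive after multiplication by the $O(n^{2})$ or $O(n)$ coefficients, and verify the vanishing of the two highest powers of $n$ before reading off the leading term. This last verification — the only genuinely delicate point — is precisely the symbolic computation the authors automate with MAPLE in Section~5.
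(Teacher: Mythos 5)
The paper offers no written proof of this lemma --- the central moments are simply stated (their correctness being checked only numerically via the Maple experiments of Section~5) --- and your scheme (binomial expansion of $(t-x)^m$ into raw moments, the Beta-integral identity $\int_0^1\mu_{n,k+s}^{\rho}(t)\,t^i\,dt=\prod_{l=1}^{i}\bigl((k+s)\rho+l\bigr)\big/\prod_{l=1}^{i}(n\rho+l+1)$ after absorbing the index shifts, reduction of the inner sums to $\alpha$-Bernstein power sums, and substitution of the prescribed coefficients with the cancellation of the two top powers of $n$ forced by $2a_2(n)-b_0(n)=0$ and $2a_0(n)+a_1(n)+a_2(n)=1$) is precisely the direct symbolic computation the authors implicitly rely on, and it is set up correctly, including the observation that the second central moment is already contained in the preceding lemma. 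The only caveat is that you assert rather than exhibit the final cancellations and the exact leading coefficients, but this is no less detail than the paper itself provides for this statement.
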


\begin{thm}
If $f\in C^6[0,1]$ and $x\in [0,1],$ then for sufficiently large $n$, we have:
$$\overline{\textit{J}}_{n,\alpha,\rho}^{M,2}(f;x)-f(x)=O\left(\frac{1}{n^2}\right).$$
\end{thm}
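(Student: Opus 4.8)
The plan is to run a sixth-order Voronovskaya-type expansion, using only the identities $\overline{\textit{J}}_{n,\alpha,\rho}^{M,2}(e_0;x)=1$, $\overline{\textit{J}}_{n,\alpha,\rho}^{M,2}(e_1;x)=x$ recorded above together with the central moments in Lemma \ref{l4}. Fix $x\in[0,1]$. Since $f\in C^6[0,1]$, Taylor's theorem in Peano form gives
$$f(t)=\sum_{j=0}^{6}\frac{f^{(j)}(x)}{j!}(t-x)^{j}+(t-x)^{6}\,\eta(t,x),$$
where $\eta(\cdot\,,x)$ is continuous on $[0,1]$ with $\eta(x,x)=0$; in particular $|\eta(t,x)|\le M$ on $[0,1]$ for some constant $M=M(x)$. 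Applying $\overline{\textit{J}}_{n,\alpha,\rho}^{M,2}(\cdot\,;x)$, using linearity and $\overline{\textit{J}}_{n,\alpha,\rho}^{M,2}(e_0;x)=1$,
$$\overline{\textit{J}}_{n,\alpha,\rho}^{M,2}(f;x)-f(x)=\sum_{j=1}^{6}\frac{f^{(j)}(x)}{j!}\,\overline{\textit{J}}_{n,\alpha,\rho}^{M,2}\big((t-x)^{j};x\big)+\overline{\textit{J}}_{n,\alpha,\rho}^{M,2}\big(\eta(t,x)(t-x)^{6};x\big).$$

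The polynomial part is immediate from Lemma \ref{l4}: the first central moment vanishes identically (since $\overline{\textit{J}}_{n,\alpha,\rho}^{M,2}(e_1;x)=x$), the central moments of orders $2,3,4$ are each $O(n^{-2})$ (numerators of orders $1$, $n$, $n^{2}$ over denominators of orders $n^{2}$, $n^{3}$, $n^{4}$), and those of orders $5,6$ are $O(n^{-3})$. Hence $\sum_{j=1}^{6}\frac{f^{(j)}(x)}{j!}\overline{\textit{J}}_{n,\alpha,\rho}^{M,2}((t-x)^{j};x)=O(n^{-2})$, the dominant contribution coming from $j=2,3,4$.

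It remains to estimate the remainder $R_{n}:=\overline{\textit{J}}_{n,\alpha,\rho}^{M,2}(\eta(t,x)(t-x)^{6};x)$, which is the only genuine work. Rewrite the operator in the form analogous to (\ref{a9}),
$$\overline{\textit{J}}_{n,\alpha,\rho}^{M,2}(g;x)=\sum_{k=0}^{n-2}p_{n-2,k,\alpha}(x)\Big(a(x,n)\Lambda_{n,k}(g)+b(x,n)\Lambda_{n,k+1}(g)+a(1-x,n)\Lambda_{n,k+2}(g)\Big),$$
where $\Lambda_{n,k}(g)=\int_{0}^{1}\mu_{n,k}^{\rho}(t)g(t)\,dt$. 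For $0\le\alpha\le1$ each $p_{n-2,k,\alpha}(x)\ge0$ and each $\Lambda_{n,k}$ is a positive functional, while $a(x,n)=\tfrac{n(\rho+1)}{2\rho}x(x-1)+O(1)$, with analogous expressions for $a(1-x,n)$ and $b(x,n)=\tfrac{n(\rho+1)}{\rho}x(1-x)$, so that $|a(x,n)|,|a(1-x,n)|,|b(x,n)|\le C_{0}n$ uniformly on $[0,1]$ for large $n$ (using $|x(x-1)|\le\tfrac14$). Since $|\eta|\le M$,
$$|R_{n}|\le C_{0}M\,n\sum_{k=0}^{n-2}p_{n-2,k,\alpha}(x)\sum_{j=0}^{2}\Lambda_{n,k+j}\big((t-x)^{6}\big)=:C_{0}M\,n\,S_{6}(n).$$

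The crux, and the step I flag as the main obstacle, is the bound $S_{6}(n)=O(n^{-3})$. This is obtained exactly as the even central moments in Lemma \ref{l4}: expand $(t-x)^{6}$ binomially, evaluate $\int_{0}^{1}\mu_{n,k+j}^{\rho}(t)t^{r}\,dt$ by the Beta-function identity, sum against the nonnegative partition of unity $\{p_{n-2,k,\alpha}(x)\}$, and track powers of $n$ (routine bookkeeping, confirmable by the Maple routines of Section 5; writing $t-x=(t-m_{k})+(m_{k}-x)$ with $m_{k}=\tfrac{k\rho+1}{n\rho+2}$ exposes the order, since both the $\mu_{n,k+j}^{\rho}$-central moments and the $\{p_{n-2,k,\alpha}\}$-moments of orders $2,4,6$ are $O(n^{-1}),O(n^{-2}),O(n^{-3})$). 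Granting this, $|R_{n}|\le C_{0}M\,n\cdot O(n^{-3})=O(n^{-2})$, and together with the polynomial estimate this gives $\overline{\textit{J}}_{n,\alpha,\rho}^{M,2}(f;x)-f(x)=O(n^{-2})$. (For the sharper $o(n^{-2})$ and the explicit limiting expression in $f''(x),f'''(x),f^{(4)}(x)$ one would split $[0,1]$ around $x$ and use $|\eta|<\varepsilon$ near $x$ with $(t-x)^{6}\le\delta^{-2}(t-x)^{8}$ away from it, additionally needing a pseudo eighth moment of order $O(n^{-4})$.)
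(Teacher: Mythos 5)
Your argument is essentially the paper's own proof: a sixth-order Taylor expansion with Peano remainder, Lemma \ref{l4} for the polynomial part, and then a bound on the remainder obtained by writing $\overline{\textit{J}}_{n,\alpha,\rho}^{M,2}$ as the three coefficient functions (each $O(n)$ uniformly on $[0,1]$) acting against the nonnegative basis $p_{n-2,k,\alpha}$ and Beta-type functionals, with the sixth pseudo-moments of size $O(n^{-3})$ giving $O(n)\cdot O(n^{-3})=O(n^{-2})$. The only real difference is that the paper carries out the crux bound you flag (your $S_6(n)=O(n^{-3})$) explicitly, exhibiting the leading terms of the form $n^3/\prod_{k=2}^{7}(n\rho+k)$, whereas you defer it to routine bookkeeping; your sketch of that computation (splitting $t-x=(t-m_k)+(m_k-x)$ and using the even moments of both families) is sound.
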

\begin{proof}
Applying the operators $\overline{\textit{J}}_{n,\alpha,\rho}^{M,2}(.;x)$ to Taylor's formula, we get:
\begin{eqnarray*}
\overline{\textit{J}}_{n,\alpha,\rho}^{M,2}(f;x)=f(x)+\sum_{k=1}^6 f^{(k)}(x) \overline{\textit{J}}_{n,\alpha,\rho}^{M,2}\left((t-x)^k:x\right)+\overline{\textit{J}}_{n,\alpha,\rho}^{M,2}\left(\theta(t,x)(t-x)^6;x\right),
\end{eqnarray*}
where $\displaystyle\lim_{t \rightarrow x}\theta(t,x)=0.$\\
We can easily see that from Lemma \ref{l4}, it is enough to prove the following:
\begin{eqnarray*}
\overline{\textit{J}}_{n,\alpha,\rho}^{M,2}\left(\theta(t,x)(t-x)^6;x\right)=0.
\end{eqnarray*}
Now,\\
$|\overline{\textit{J}}_{n,\alpha,\rho}^{M,2}\left(\theta(t,x)(t-x)^6;x\right)|$
\begin{eqnarray*}
 &=& \left|\left[\left(\frac{n(\rho+1)}{2\rho} x^2-\frac{(n+2)(\rho+1)}{2\rho} x+\frac{1+2\rho}{2\rho}\right)\sum_{k=0}^{n-2} p_{n-2,k,\alpha}(x)+ \frac{n(\rho+1)}{\rho} x(1-x) \sum_{k=1}^{n-1} p_{n-2,k-1,\alpha}(x)\right.\right.\\
&& +\left.\left.\left(\frac{n(\rho+1)}{2\rho} x^2-\frac{(n-2)(\rho+1)}{2\rho} x-\frac{1}{2\rho}\right)\sum_{k=2}^n p_{n-2,k-2,\alpha}(x)\right]\int_0^1 \mu_{n,k}^{\rho}(t)\theta(t,x)(t-x)^6dt\right|\\
&\leq & \left|\frac{n(\rho+1)}{2\rho} x^2-\frac{(n+2)(\rho+1)}{2\rho} x+\frac{1+2\rho}{2\rho}\right| \sum_{k=0}^{n-2} p_{n-2,k,\alpha}(x)\int_0^1 \mu_{n,k}^{\rho}(t)\theta(t,x)(t-x)^6dt\\
&& +\left|\frac{n(\rho+1)}{\rho}x(1-x)\right| \sum_{k=1}^{n-1} p_{n-2,k-1,\alpha}(x)\int_0^1 \mu_{n,k}^{\rho}(t)\theta(t,x)(t-x)^6dt\\
&& +\left|\frac{n(\rho+1)}{2\rho}x(x-1)+\frac{2(\rho+1)}{2\rho} x-\frac{1}{2\rho}\right|\sum_{k=2}^n p_{n-2,k-2,\alpha}(x)\int_0^1 \mu_{n,k}^{\rho}(t)\theta(t,x)(t-x)^6 dt.
\end{eqnarray*}
Let $M=sup_{t\in [0,1]}|\theta(t,x)|,$ then we have\\

$|\overline{\textit{J}}_{n,\alpha,\rho}^{M,2}\left(\theta(t,x)(t-x)^6;x\right)|$
\begin{eqnarray*}
&<&M
\left[\frac{n(\rho+1)}{8\rho}+\frac{\rho+1}{\rho}+\frac{1+2\rho}{2\rho}\right]\sum_{k=0}^{n-2} p_{n-2,k,\alpha}(x)\int_0^1 \mu_{n,k}^{\rho}(t)(t-x)^6 dt\\&&+M\left[\frac{n(\rho+1)}{4\rho}\right]\sum_{k=1}^{n-1} p_{n-2,k-1,\alpha}(x)\int_0^1 \mu_{n,k}^{\rho}(t)(t-x)^6dt\\
&& + M\left[\frac{n(\rho+1)}{8\rho}+\frac{2(\rho+1)}{2\rho}+\frac{1}{2\rho}\right]\sum_{k=2}^n p_{n-2,k-2,\alpha}(x)\int_0^1 \mu_{n,k}^{\rho}(t-x)^6dt\\
& = &\frac{M n(\rho+1)}{8\rho}\left[\sum_{k=0}^{n-2} p_{n-2,k,\alpha}(x) \int_0^1 \mu_{n,k}^{\rho}(t-x)^6dt\right.\\
&& \left.+2 \sum_{k=1}^{n-1} p_{n-2,k-1,\alpha}(x)\int_0^1 \mu_{n,k}^{\rho}(t-x)^6dt + \sum_{k=2}^n p_{n-2,k-2,\alpha}(x)\int_0^1 \mu_{n,k}^{\rho}(t-x)^6dt\right]\\
&& +\frac{4\rho+3}{2\rho} M\left[\sum_{k=0}^{n-2} p_{n-2,k,\alpha}(x) \int_0^1 \mu_{n,k}^{\rho}(t-x)^6dt+\sum_{k=2}^n p_{n-2,k-2,\alpha}(x) \int_0^1 \mu_{n,k}^{\rho}(t-x)^6dt\right]\\
& = &\frac{Mn(\rho+1)}{8\rho}\left\{\frac{15x^3{\rho}^3(4(1+\rho)^3(1-3x+3x^2)-x^3(4+4\rho(3+3\rho(3+\rho))))n^3}{\displaystyle\prod_{k=2}^7(n\rho+k)} +O\left(\frac{1}{n^4}\right)\right\}
\end{eqnarray*}
\begin{eqnarray*}
 && +\frac{4\rho+3}{2\rho} M \left\{\frac{15x^3{\rho}^3((1+{\rho})^3(1-3x+3x^2)-x^3(1+\rho(3+\rho(3+\rho))))n^3}{\displaystyle\prod_{k=2}^7(n\rho+k)}\right.\\
 && +\left.\frac{15x^3(1-x)^3{\rho}^3(1+{\rho})^3n^3}{\displaystyle\prod_{k=2}^7(n\rho+k)}+O\left(\frac{1}{n^4}\right)\right\}= O\left(\frac{1}{n^2}\right).
\end{eqnarray*}
Hence, the proof is completed.
\end{proof}

\section{$\alpha-$Bernstein P\u{a}lt\u{a}nea operators of third order}
Continuing in the same way as above, we can modified operators to obtain third order approximation operators which is defined as follows:
\begin{eqnarray}\label{a4}
\textit{J}_{n,\alpha,\rho}^{M,3}(f,x)=\sum_{k=0}^n p_{n,k,\alpha}^{M,3}(x)\int_0^1\mu_{n,k}^\rho(t)f(t)dt,
\end{eqnarray}
where
\begin{eqnarray}
 p_{n,k,\alpha}^{M,3}(x) &=& \overline{a}(x,n)p_{n-4,k,\alpha}(x)+\overline{b}(x,n)p_{n-4,k-1,\alpha}(x)+ \overline{d}(x,n)p_{n-4,k-2,\alpha}(x)\nonumber\\
&& +\overline{b}(1-x,n)p_{n-4,k-3,\alpha}(x)+\overline{a}(1-x,n)p_{n-4,k-4,\alpha}(x),
\end{eqnarray}
and
\begin{eqnarray*}
\overline{a}(x,n) &=& \overline{a}_4(n)x^4+\overline{a}_3(n)x^3+\overline{a}_2(n)x^2+\overline{a}_1(n)x+\overline{a}_0(n),\\
\overline{b}(x,n) &=& \overline{b}_4(n) x^4+\overline{b}_3(n)x^3+\overline{b}_2(n)x^2+\overline{b}_1(n)x+\overline{b}_0(n),\\
\overline{d}(x,n) &=& \overline{d}_0(n)x^2 (1-x)^2.
\end{eqnarray*}
Here $\overline{a}_i(n),\overline{b}_i(n), i=0,1,\dotsb,4$ and $\overline{d}_0(n)$ are unknown sequences in $n,$ which can be determined in such a way that the operators (\ref{a4}) reduce to new operators $\tilde{\textit{J}}_{n,\alpha,\rho}^{M,3}(f;x) \, (say)$ with third order approximation. In order to get $\textit{J}_{n,\alpha,\rho}^{M,3}(e_i,x)=e_i, i=0,1,\dotsb,3,$ we find unknown sequences which are given by
\begin{eqnarray*}
\tilde{a}_0(n) &=& \frac{12{\rho}^3+19{\rho}^2+8\rho+1}{12{\rho}^3}, \tilde{a_1}(n)= -\frac{7{\rho}^2+11\rho+4}{12{\rho}^2}n-\frac{(29+5\alpha){\rho}^3+(48+3\alpha){\rho}^2+30\rho+7}{6{\rho}^3},\\
\tilde{a}_2(n) &=& \frac{(1+\rho)^2}{8{\rho}^2}n^2+\frac{17{\rho}^2+29\rho+12}{12{\rho}^2}n+\frac{(41-11\alpha){\rho}^3+(71-9\alpha){\rho}^2+60\rho+18}{6{\rho}^3},\\
\tilde{a}_3(n) &=& -\frac{(1+\rho)^2}{4{\rho}^2}n^2-\frac{5{\rho}^2+9\rho+4}{6{\rho}^2}n-\frac{(3-\alpha){\rho}^3+(7-\alpha){\rho}^2+6\rho+2}{{\rho}^3},\\
\tilde{a}_4(n) &=& \frac{(1+\rho)^2}{8{\rho}2}n^2, \tilde{b_0}(n)=-\frac{12{\rho}^2+7\rho+1}{6{\rho}^3},\\
\tilde{b}_1(n) &=& \frac{(3{\rho}^2+5\rho+2)}{3{\rho}^2}n+\frac{(16-4\alpha){\rho}^3+(37-3\alpha){\rho}^2+27\rho+7}{3{\rho}^3},\\
\tilde{b}_2(n) &=& -\frac{(1+\rho)^2}{2{\rho}^2}n^2-\frac{8{\rho}^2+14\rho+6}{3{\rho}^2}n-\frac{(34-10\alpha){\rho}^3+(71-9\alpha){\rho}^2+57\rho+18}{3{\rho}^3},\\
\tilde{b}_3(n) &=& \frac{(1+\rho)^2}{{\rho}^2}n^2+\frac{5{\rho^2}+9\rho+4}{3\rho}n+\frac{2((3-\alpha){\rho}^3+(7-\alpha){\rho}^2+6\rho+2)}{{\rho}^3},\\
\tilde{b}_4(n) &=&-\frac{(1+\rho)^2}{2{\rho}^2}n^2, \tilde{d_0}(n)=3\frac{(1+\rho)^2}{4{\rho}^2}n^2.
\end{eqnarray*}
\begin{lemma}
The central moments of the operators $\tilde{\textit{J}}_{n,\alpha,\rho}^{M,3}(f;x)$ are given by
\begin{eqnarray*}
\tilde{\textit{J}}_{n,\alpha,\rho}^{M,3}(t-x;x) &=& \tilde{\textit{J}}_{n,\alpha,\rho}^{M,3}((t-x)^2;x)=\tilde{\textit{J}}_{n,\alpha,\rho}^{M,3}((t-x)^3;x)=0,\\
\tilde{\textit{J}}_{n,\alpha,\rho}^{M,3}((t-x)^4;x) &=& \frac{x(1-x)\rho(1+\rho)[11-(58+106\rho)x(1-x)+27\rho+2{\rho}^2(6-(-29+6\alpha)x(1-x))]n}{\prod_{k=2}^5(n\rho+k)}\\
&& + O\left(\frac{1}{n^4}\right),\\
\tilde{\textit{J}}_{n,\alpha,\rho}^{M,3}((t-x)^5;x) &=& \frac{5x^2(1-x)^2(2x-1){\rho}^2(1+{\rho})^2(5+4\rho)n^2}{\prod_{k=2}^6(n\rho+k)}+O\left(\frac{1}{n^4}\right),\\
\tilde{\textit{J}}_{n,\alpha,\rho}^{M,3}((t-x)^6;x) &=& \frac{15x^3(1-x)^3{\rho}^3(1+\rho)^3n^3}{\prod_{k=2}^7(n\rho+k)}+O\left(\frac{1}{n^4}\right),\\
\tilde{\textit{J}}_{n,\alpha,\rho}^{M,3}((t-x)^7;x) &=& \tilde{\textit{J}}_{n,\alpha,\rho}^{M,3}((t-x)^8;x)=O\left(\frac{1}{n^4}\right),\\
\tilde{\textit{J}}_{n,\alpha,\rho}^{M,3}((t-x)^9;x) &=& \tilde{\textit{J}}_{n,\alpha,\rho}^{M,3}((t-x)^{10};x)=O\left(\frac{1}{n^5}\right).
\end{eqnarray*}
\end{lemma}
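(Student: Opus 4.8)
The plan is to reduce the whole statement to two ingredients: the reproduction property that the sequences $\tilde a_i(n),\tilde b_i(n),\tilde d_0(n)$ were constructed to guarantee, and explicit moment identities for the $\alpha$-Bernstein basis and for the P\u{a}lt\u{a}nea weights; the orders claimed then come out of the cancellations forced by the reproduction property. First, recall that $\tilde a_i(n),\tilde b_i(n),\tilde d_0(n)$ were pinned down by the four conditions $\tilde{\textit{J}}_{n,\alpha,\rho}^{M,3}(e_i;x)=e_i$ for $i=0,1,2,3$. Since $\tilde{\textit{J}}_{n,\alpha,\rho}^{M,3}$ is linear, expanding $(t-x)^j=\sum_{r=0}^{j}\binom{j}{r}(-x)^{j-r}e_r$ for $j=1,2,3$ and substituting $\tilde{\textit{J}}_{n,\alpha,\rho}^{M,3}(e_r;x)=x^r$ gives $\sum_{r=0}^{j}\binom{j}{r}(-x)^{j-r}x^r=(x-x)^j=0$, which settles the first line.

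For the remaining central moments I would first record the elementary Beta-function identity $\int_0^1\mu_{n,j}^\rho(t)\,t^i\,dt=\dfrac{B(j\rho+i+1,(n-j)\rho+1)}{B(j\rho+1,(n-j)\rho+1)}=\dfrac{\prod_{s=1}^{i}(j\rho+s)}{\prod_{s=2}^{i+1}(n\rho+s)}$, so that $m_i(j):=\int_0^1\mu_{n,j}^\rho(t)t^i\,dt$ is a polynomial of degree $i$ in $j$ divided by $\prod_{s=2}^{i+1}(n\rho+s)$. Rewriting $\tilde{\textit{J}}_{n,\alpha,\rho}^{M,3}$ in the single-sum form analogous to $(\ref{a9})$, $$\tilde{\textit{J}}_{n,\alpha,\rho}^{M,3}(e_i;x)=\sum_{k=0}^{n-4}p_{n-4,k,\alpha}(x)\Bigl[\overline a(x,n)m_i(k)+\overline b(x,n)m_i(k{+}1)+\overline d(x,n)m_i(k{+}2)+\overline b(1{-}x,n)m_i(k{+}3)+\overline a(1{-}x,n)m_i(k{+}4)\Bigr],$$ one sees that each $\tilde{\textit{J}}_{n,\alpha,\rho}^{M,3}(e_i;x)$, $i=4,\dots,10$, is a finite linear combination of the factorial moments $\sum_k p_{n-4,k,\alpha}(x)(k)_r$ of the $\alpha$-Bernstein basis of degree $n-4$; these are known polynomials in $x$, $n$ and $\alpha$ obtainable from the structure of $p_{n,k}^\alpha$ (or from the moments of $\mathcal{T}_n^\alpha$ in \cite{CHE,KA}) by a short recursion.

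The execution then consists in substituting the explicit sequences $\tilde a_i(n),\tilde b_i(n),\tilde d_0(n)$ and these $\alpha$-Bernstein moments, forming $\tilde{\textit{J}}_{n,\alpha,\rho}^{M,3}((t-x)^j;x)=\sum_{r=0}^{j}\binom{j}{r}(-x)^{j-r}\tilde{\textit{J}}_{n,\alpha,\rho}^{M,3}(e_r;x)$ for $j=4,\dots,10$, putting everything over the common denominator $\prod_{s=2}^{j+1}(n\rho+s)$, and expanding the numerator in powers of $n$. Because the coefficient sequences satisfy the four reproduction conditions, the would-be top contributions collapse (this is exactly why the first three central moments vanish), so the numerators have strictly lower degree than in the generic case, and matching the surviving leading coefficients against the expressions in the statement — together with checking that the residuals are $O(1/n^4)$ for $j\le 8$ and $O(1/n^5)$ for $j=9,10$ — finishes the argument.

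The main obstacle is computational rather than conceptual: the substitution mixes $\alpha$-Bernstein factorial moments of order up to $10$ with coefficient polynomials of degree up to $n^2$, so the intermediate expressions are very large and the stated orders depend on massive, non-transparent cancellations. I would therefore carry out the bookkeeping with a computer algebra system (MAPLE, as elsewhere in the paper), keeping every term in the form $(\,\cdot\,)/\prod_{s=2}^{j+1}(n\rho+s)$ via the identity for $m_i(j)$ above and then taking the asymptotic expansion in $1/n$; the $x\leftrightarrow 1-x$ symmetry of the construction (which forces the odd central moments to be odd functions of $2x-1$, in agreement with the factors $(2x-1)$ appearing in the statement) provides a convenient independent check.
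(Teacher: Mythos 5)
The paper states this lemma without proof---it is a direct, Maple-assisted moment computation---and your outline follows essentially the same route: the reproduction property $\tilde{\textit{J}}_{n,\alpha,\rho}^{M,3}(e_i;x)=x^i$, $i=0,1,2,3$, kills the first three central moments by linearity, and the Beta-function identity for $\int_0^1\mu_{n,k}^{\rho}(t)t^i\,dt$ together with the single-sum rewriting reduces the higher central moments to known $\alpha$-Bernstein moments whose symbolic expansion yields the stated leading terms and orders. One small correction that does not affect the argument: the four conditions $\tilde{\textit{J}}_{n,\alpha,\rho}^{M,3}(e_i;x)=x^i$ cannot by themselves ``pin down'' the eleven sequences (the system is underdetermined); what you actually use is only that the explicitly listed sequences satisfy these conditions, a fact that must be verified within the same computation.
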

The asymptotic order of approximation of the operators $\tilde{\textit{J}}_{n,\alpha,\rho}^{M,3}(f;x)$ to $f(x),$ when $n\rightarrow \infty$ is given in the following result:
\begin{thm}
If $f\in C^{10}[0,1]$ and $x\in[0,1],$ then for sufficiently large $n,$ we have
$$\tilde{\textit{J}}_{n,\alpha,\rho}^{M,3}(f;x)-f(x)=O\left(\frac{1}{n^3}\right).$$
\end{thm}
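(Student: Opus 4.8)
The plan is to mimic the proof of the second-order asymptotic result, using a tenth-order Taylor expansion in place of the sixth-order one. Since $f\in C^{10}[0,1]$, for the fixed $x$ we may write
\[
f(t)=f(x)+\sum_{k=1}^{10}\frac{f^{(k)}(x)}{k!}(t-x)^k+\theta(t,x)(t-x)^{10},
\]
where $\theta(\cdot,x)\in C[0,1]$, $\theta(x,x)=0$ and $\lim_{t\to x}\theta(t,x)=0$; in particular $M:=\sup_{t\in[0,1]}|\theta(t,x)|<\infty$. Applying the linear operator $\tilde{\textit{J}}_{n,\alpha,\rho}^{M,3}(\cdot;x)$ and using $\tilde{\textit{J}}_{n,\alpha,\rho}^{M,3}(e_0;x)=1$ gives
\[
\tilde{\textit{J}}_{n,\alpha,\rho}^{M,3}(f;x)-f(x)=\sum_{k=1}^{10}\frac{f^{(k)}(x)}{k!}\,\tilde{\textit{J}}_{n,\alpha,\rho}^{M,3}\big((t-x)^k;x\big)+\tilde{\textit{J}}_{n,\alpha,\rho}^{M,3}\big(\theta(t,x)(t-x)^{10};x\big).
\]

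Next I would read off the orders of the central moments from the preceding lemma. There $\tilde{\textit{J}}_{n,\alpha,\rho}^{M,3}((t-x)^k;x)=0$ for $k=1,2,3$; for $k=4,5,6$ the displayed expressions, after cancelling equal powers of $n$ in numerator and denominator, are $O(n^{-3})$; and for $k=7,\dots,10$ they are $O(n^{-4})$ or $O(n^{-5})$. Since each $f^{(k)}(x)$ is a fixed constant, the finite sum $\sum_{k=1}^{10}\frac{f^{(k)}(x)}{k!}\tilde{\textit{J}}_{n,\alpha,\rho}^{M,3}((t-x)^k;x)$ is therefore $O(n^{-3})$. Hence it only remains to prove
\[
\tilde{\textit{J}}_{n,\alpha,\rho}^{M,3}\big(\theta(t,x)(t-x)^{10};x\big)=O\big(n^{-3}\big),
\]
and the claimed estimate follows.

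For the last display I would argue exactly as in the proof for the second-order operators. Splitting $p_{n,k,\alpha}^{M,3}(x)$ into its five blocks and using $|\theta(t,x)(t-x)^{10}|\le M(t-x)^{10}$ together with the nonnegativity of $p_{n-4,\ell,\alpha}(x)$ and of $\int_0^1\mu_{n,k}^\rho(t)(t-x)^{10}\,dt$ gives
\[
\big|\tilde{\textit{J}}_{n,\alpha,\rho}^{M,3}\big(\theta(t,x)(t-x)^{10};x\big)\big|\le M\sum_{j=0}^{4}\big\|c_j(\cdot,n)\big\|_{[0,1]}\,\sum_{k}p_{n-4,k-j,\alpha}(x)\int_0^1\mu_{n,k}^\rho(t)(t-x)^{10}\,dt,
\]
where $c_0(x,n)=\overline a(x,n)$, $c_1(x,n)=\overline b(x,n)$, $c_2(x,n)=\overline d(x,n)$, $c_3(x,n)=\overline b(1-x,n)$, $c_4(x,n)=\overline a(1-x,n)$ are the coefficient polynomials with $\overline a_i,\overline b_i,\overline d_0$ specialized to the sequences $\tilde a_i(n),\tilde b_i(n),\tilde d_0(n)$. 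From the explicit formulas for those sequences every $\big\|c_j(\cdot,n)\big\|_{[0,1]}$ is $O(n^2)$; and each shifted weighted tenth-moment sum $\sum_k p_{n-4,k-j,\alpha}(x)\int_0^1\mu_{n,k}^\rho(t)(t-x)^{10}\,dt$ is, up to lower-order terms, a tenth central moment of a Bernstein--Pălțănea-type operator, hence $O(n^{-5})$ uniformly in $x\in[0,1]$. Multiplying, each of the five summands is $O(n^{2}\cdot n^{-5})=O(n^{-3})$, which yields the required bound and completes the proof.

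The main obstacle is precisely this last estimate: one must verify that the $O(n^{2})$ growth of the coefficient polynomials $\overline a,\overline b,\overline d$ is exactly absorbed by the $O(n^{-5})$ decay of the weighted tenth-moment sums. This requires an explicit evaluation of $\sum_k p_{n-4,k-j,\alpha}(x)\int_0^1\mu_{n,k}^\rho(t)(t-x)^{10}\,dt$ for each shift $j\in\{0,1,2,3,4\}$, checking that the leading $n^{5}$ terms cancel between numerator and denominator so that the quotient is genuinely $O(n^{-5})$ uniformly on $[0,1]$; in the style of this paper this is carried out by the same Maple-assisted moment computations that underlie the central-moment lemma above. Everything else is routine bookkeeping.
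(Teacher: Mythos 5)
Your argument is correct and is essentially the intended one: the paper actually states this theorem without giving a proof, and your proposal reproduces, with a tenth-order Taylor expansion, the same scheme the paper uses for the second-order result (vanishing/decay of the central moments from the lemma for the main terms, plus a crude bound of the Peano remainder by coefficient sup-norms times even-order moment sums). The only ingredient not already contained in the stated lemmas is the uniform $O(n^{-5})$ bound for the shifted weighted tenth-moment sums, which you correctly single out; it is of exactly the same nature as the sixth-moment computations displayed in the second-order proof (here coefficients of size $O(n^{2})$ against moments of size $O(n^{-5})$, instead of $O(n)$ against $O(n^{-3})$), so the bookkeeping closes as you claim.
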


\section{Numerical Results}
In the present section, we give the numerical examples to validate our theoretical results and error estimation by using maple algorithms:
\begin{example}
Let $f(x)=sin(2\pi x)+2sin\left(\frac{1}{2}\pi x\right),\, n=10, \rho=5, \alpha=0.2, a_0(n)=\dfrac{n-1}{2n}$ and $a_1(n)=\dfrac{1}{n}.$ The comparison of convergence of the $\alpha-$Bernstein-P\u{a}lt\u{a}nea operators and its above modifications of orders one, two and three to $f(x)$ is given in Fig. 1.


\begin{center}
\includegraphics[width=0.62\columnwidth]{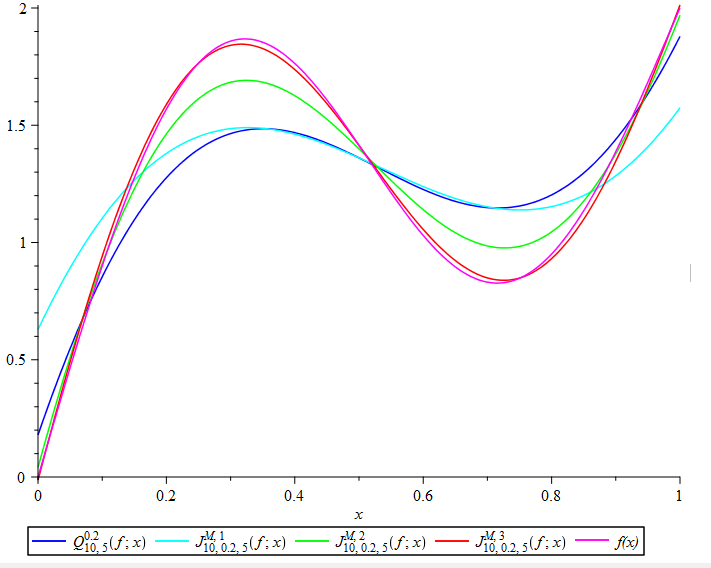}\\
{Figure 1. Approximation process}
\end{center}
Let $E_{n,\rho}^\alpha(f;x)=|f(x)-\textit{Q}_{n,\rho}^\alpha(f;x)|$ and $E_{n,\alpha,\rho}^{M,i}(f;x)=|f(x)-\textit{J}_{n,\alpha,\rho}^{M,i}(f;x)|,\,i=1,2,3$ be error function of $\alpha-$Bernstein P\u{a}lt\u{a}nea operators and its modifications respectively. The error of approximation of these operators are given in figure 2. From both the figures, we can conclude that our modified operators are converging faster than original $\alpha-$Bernstein P\u{a}lt\u{a}nea operators. Also, we have given error of approximation at some certain points in Table 1.
\begin{center}
\includegraphics[width=0.62\columnwidth]{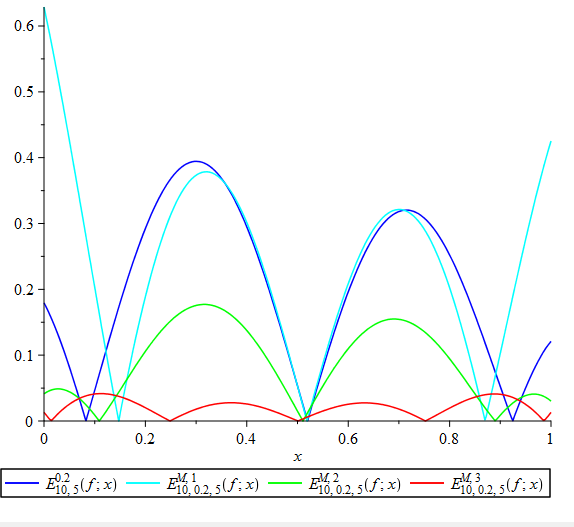}\\
\vskip0.1in
{Figure 2. Error estimation}
\end{center}
\begin{table}[htb]
\centering
\caption{Error of Approximation $E_{n,\rho}^\alpha$ and $E_{n,\alpha,\rho}^{M,i}, i=1,2,3, n=10, \rho=5,\alpha=0.2$}
\begin{tabular}{|c|c|c|c|c|}
\hline $x$ &  ${E}_{10,5}^{0.2}(f;x)$ & $E_{10,0.2,5}^{M,1}(f;x)$ &
$E_{10,0.2,5}^{M,2}(f;x)$ & $E_{10,0.2,5}^{M,3}(f;x)$\\
\hline 0.1 & 0.0465927917 & 0.2029191714 & 0.0090258508 & 0.0410711480\\
\hline 0.2 & 0.292607380 & 0.189033401 & 0.105857578 & 0.020163736\\
\hline 0.3 & 0.394424647 & 0.373376025 & 0.175570300 & 0.018015963\\
\hline 0.4 & 0.294804927 & 0.301231910 &  0.137423887& 0.025812675\\
\hline 0.5 &  0.053236320 & 0.053236321 & 0.014926178  & 0.000239280\\
\hline 0.6 & 0.196044423 & 0.208534327 & 0.109839626 & 0.025493047 \\
\hline 0.7 & 0.3182317026 & 0.3213021846  & 0.1544639662 & 0.0180914623 \\
\hline 0.8 & 0.2521506342 & 0.2023484272 & 0.0944769302  & 0.0196894259\\
\hline 0.9 & 0.052294816 & 0.102854286 & 0.008980412 & 0.040540562\\
\hline
\end{tabular}
\end{table}
\end{example}


\begin{example}
Let us choose $f(x)=x \cos(2\pi x)$, $ \rho=4, \alpha=0.3, a_0(n)=\dfrac{n-1}{2n}$ and $a_1(n)=\dfrac{1}{n}.$ The behaviour of $\alpha-$Bernstein-P\u{a}lt\u{a}nea operators $\textit{Q}_{n,\rho}^{\alpha}(f;x)$ and its three modifications $\textit{J}_{n,\alpha,\rho}^{M,i}(f;x)$ where $i=1,2,3$ to $f(x)$ for $n=10, 15, 20$ is given in Fig. 3, 5, 7. We can observe from figures that our modifications are converging to a function as we increase the value of $n$ and also give better convergence than original operators.

\begin{center}
\includegraphics[width=0.65\columnwidth]{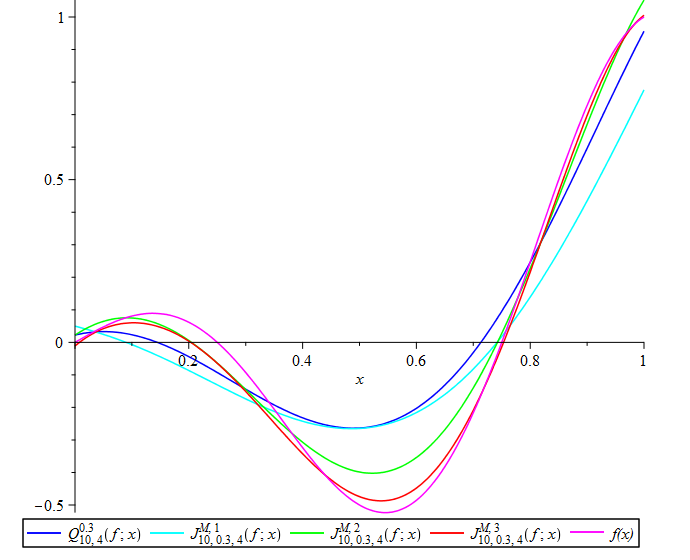}\\
{Figure 3. Approximation process}
\end{center}
The error of approximation of $\alpha-$Bernstein P\u{a}lt\u{a}nea operators and its modifications of order $1,2,3$ are given in Fig. 4,6,8 for $n=10,15,20$ respectively. It can be easily seen that error estimation by our modifications are less than original $\alpha-$Bernstein P\u{a}lt\u{a}nea operators. Also, the error of operators and its modifications at some points are given in Table 2, 3, 4 at the values $n=10,15,20$ respectively.
\begin{center}
\includegraphics[width=0.65\columnwidth]{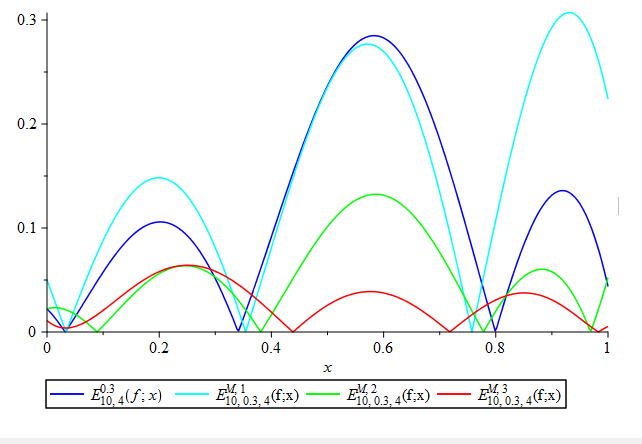}\\
{Figure 4. Error estimation}
\end{center}
\begin{table}[htb]
\centering
\caption{Error of Approximation $E_{n,\rho}^\alpha$ and $E_{n,\alpha,\rho}^{M,i}, i=1,2,3, n=10, \rho=4,\alpha=0.3$}
\begin{tabular}{|c|c|c|c|c|}
\hline $x$ &  ${E}_{10,4}^{0.3}(f;x)$ & $E_{10,0.3,4}^{M,1}(f;x)$ &
$E_{10,0.3,4}^{M,2}(f;x)$ & $E_{10,0.3,4}^{M,3}(f;x)$\\
\hline 0.1 & 0.05755638690 & 0.07198182417 & 0.00593343135 & 0.02211119337\\
\hline 0.2 & 0.1058403144 & 0.1347020007 & 0.05618431396 & 0.00086323684\\
\hline 0.3 & 0.05131069544 & 0.07328503894 & 0.05274231364 & 0.01900023794\\
\hline 0.4 & 0.0916546188 & 0.0844657778 & 0.0166755487 & 0.0079394543\\
\hline 0.5 & 0.2366464156 & 0.2366464156 & 0.1013426174  & 0.0193312761\\
\hline 0.6 & 0.2827508318 & 0.2698804172 & 0.1314792584 &  0.0277669661\\
\hline 0.7 & 0.1851049397  & 0.1344774017  & 0.0765221240 & 0.0019799015 \\
\hline 0.8 & 0.0010362418 & 0.1078472481 & 0.0202148003  & 0.0324610925\\
\hline 0.9 & 0.1316309131 & 0.2930201701 & 0.0579303733 & 0.0309826354\\
\hline
\end{tabular}
\end{table}

\begin{center}
\includegraphics[width=0.59\columnwidth]{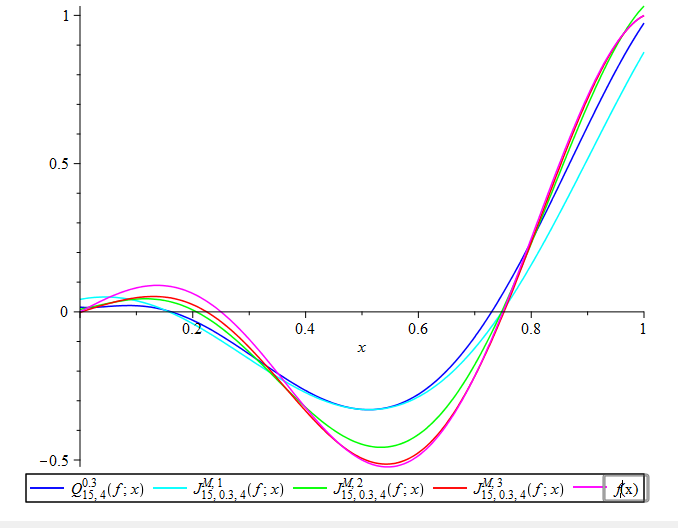}\\
{Figure 5. Approximation process}
\end{center}
\begin{center}
\includegraphics[width=0.59\columnwidth]{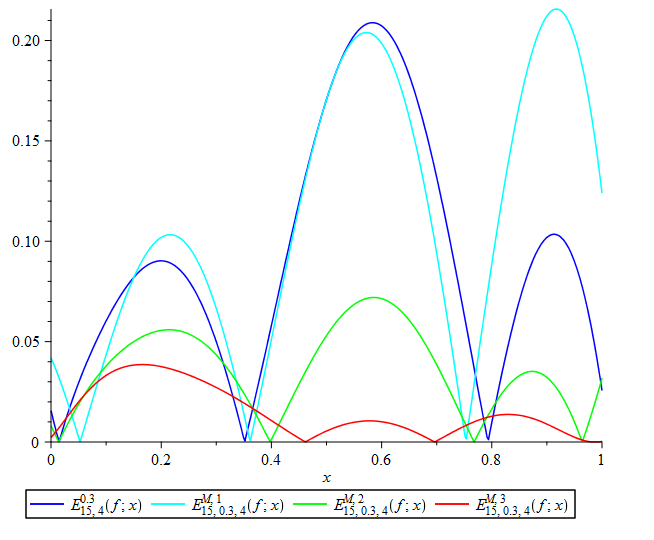}\\
{Figure 6. Error estimation}
\end{center}

\begin{table}[htb]
\centering
\caption{Error of Approximation $E_{n,\rho}^\alpha$ and $E_{n,\alpha,\rho}^{M,i}, i=1,2,3, n=15, \rho=4,\alpha=0.3$}
\begin{tabular}{|c|c|c|c|c|}
\hline $x$ &  ${E}_{15,4}^{0.3}(f;x)$ & $E_{15,0.3,4}^{M,1}(f;x)$ &
$E_{15,0.3,4}^{M,2}(f;x)$ & $E_{15,0.3,4}^{M,3}(f;x)$\\
\hline 0.1 & 0.03861056189  & 0.04342340681 & 0.00028346673 & 0.00994590237\\
\hline 0.2 & 0.08120272655 & 0.1019322964 & 0.02979966536 & 0.00261425244\\
\hline 0.3 & 0.04762100304 & 0.06648839344 & 0.03350486624 & 0.00718895284\\
\hline 0.4 & 0.0584777139  & 0.0510375462 & 0.0037061452 & 0.0043217338 \\
\hline 0.5 & 0.1707169136 & 0.1707169135 & 0.0534421617  & 0.0067597319\\
\hline 0.6 & 0.2073066153 & 0.1986262669 & 0.0714731511 & 0.0102423637 \\
\hline 0.7 & 0.1318061469 &  0.0940588932 & 0.0384911205 &  0.0005598941\\
\hline 0.8 & 0.0097174609 & 0.0887152715 &  0.0165306134 & 0.0128506443\\
\hline 0.9 & 0.1019438820 & 0.2122610177 & 0.0321883935 & 0.0089215656\\
\hline
\end{tabular}
\end{table}

\begin{center}
\includegraphics[width=0.59\columnwidth]{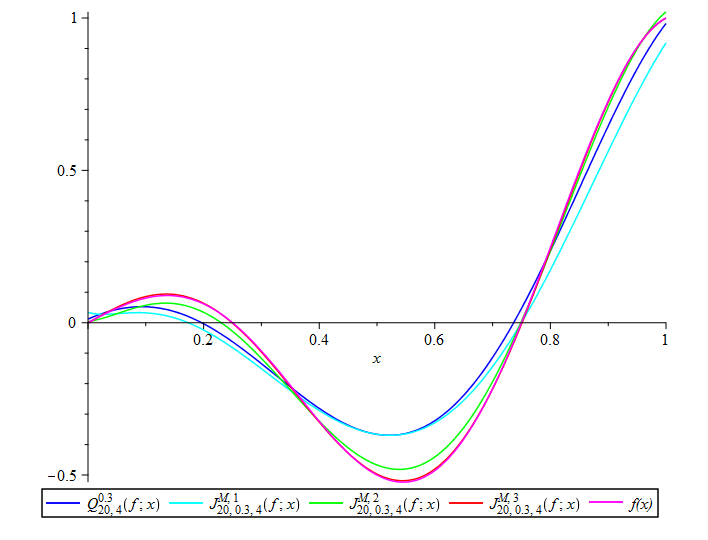}\\
{Figure 7. Approximation process}
\end{center}
\begin{center}
\includegraphics[width=0.59\columnwidth]{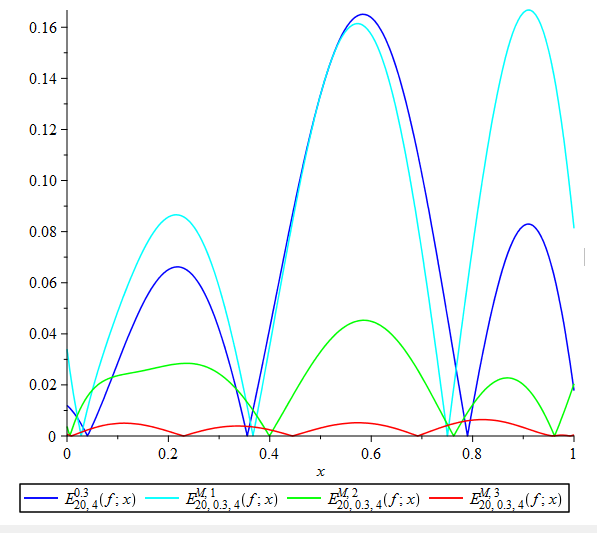}\\
{Figure 8. Error estimation}
\end{center}
\end{example}

\begin{table}[htb]
\centering
\caption{Error of Approximation $E_{n,\rho}^\alpha$ and $E_{n,\alpha,\rho}^{M,i}, i=1,2,3, n=20, \rho=4,\alpha=0.3$}
\begin{tabular}{|c|c|c|c|c|}
\hline $x$ &  ${E}_{20,4}^{0.3}(f;x)$ & $E_{20,0.3,4}^{M,1}(f;x)$ &
$E_{20,0.3,4}^{M,2}(f;x)$ & $E_{20,0.3,4}^{M,3}(f;x)$\\
\hline 0.1 & 0.02861514285  & 0.02978527157 & 0.00135692531 & 0.00493671473\\
\hline 0.2 & 0.06515132595 & 0.08083293178 & 0.01810232856 & 0.00184414784\\
\hline 0.3 & 0.04171602804 & 0.05754420964 & 0.02246595754 & 0.00339046874\\
\hline 0.4 & 0.0423780236 & 0.0356126371 & 0.0005113770 & 0.0023993240\\
\hline 0.5 & 0.1336280335 & 0.1336280335 & 0.0330134666  & 0.003147654\\
\hline 0.6 & 0.1637928626 & 0.1572063097 & 0.0448962830 & 0.0048952830 \\
\hline 0.7 & 0.1024392756 & 0.0721778840  & 0.0230362411 & 0.0006129461 \\
\hline 0.8 & 0.0113308695 & 0.0741312889 & 0.0121952799  & 0.0062328086\\
\hline 0.9 & 0.0822517165 & 0.1657103520 & 0.0201291953 & 0.0033482141\\
\hline
\end{tabular}
\end{table}

{\bf{Acknowledgements:}} The first author is thankful to the "University Grants Commission (UGC)" India for financial support to carry out the above research work.


\end{document}